\newtheorem{theorem}{Theorem}\numberwithin{theorem}{section}
\newtheorem{lemma}[theorem]{Lemma}
\newtheorem{conj}[theorem]{Conjecture}
\theoremstyle{remark}
\newtheorem{remark}[theorem]{Remark}
\newtheorem{example}[theorem]{Example}
\numberwithin{equation}{section}
\def\AA{{A}}
\def\Tcal{\mathcal{T}^\sh}
\def\TT{T}
\def\Q{\mathbb{Q}}
\long\def\comment#1{}
\def\ZZ{\mathcal{Z}}
\DeclareSymbolFont{cyss}{OT2}{wncyss}{m}{n}
\DeclareMathSymbol{\sh}{\mathbin}{cyss}{`x}
\markboth{\today}{\today}
\title{On a variant of multiple zeta values of level two}
\author[M. Kaneko]{Masanobu Kaneko}
\author[H. Tsumura]{Hirofumi Tsumura}
\address{M.\,Kaneko: Faculty of Mathematics, Kyushu University, Motooka 744, Nishi-ku, Fukuoka 819-0395, Japan}
\email{mkaneko@math.kyushu-u.ac.jp}
\address{{H.\,Tsumura:} Department of Mathematical Sciences, Tokyo Metropolitan University, 1-1, Minami-Ohsawa, Hachioji, Tokyo 192-0397, Japan}
\email{tsumura@tmu.ac.jp}
\subjclass[2010]{Primary 11M32; Secondary 11M99}
\keywords{Multiple zeta values, polylogarithms, inverse trigonometric functions}
\begin{document}

\begin{abstract}
We study a variant of multiple zeta values of level 2, which forms a subspace of 
the space of alternating multiple zeta values. This variant, which is regarded as 
the `shuffle counterpart' of Hoffman's `odd variant', exhibits nice properties
such as duality, shuffle product, parity results, etc., like ordinary multiple zeta values.
We also give some conjectures on relations between our values, Hoffman's values,
and multiple zeta values. 
  
\end{abstract}

\maketitle

%\baselineskip 16pt
%%%%%%%%%%%%%%%%%%%%%%%%%%%%%%%%%%%%%%%%%%%%                                   
\section{Introduction} \label{sec-1}
%%%%%%%%%%%%%%%%%%%%%%%%%%%%%%%%%%%%%%%%%%%%  

In this paper, we study the following variant of the multiple zeta value,
\[ \sum_{0<m_1<\cdots <m_r\atop m_i\equiv i\bmod 2}
\frac1{m_1^{k_1}m_2^{k_2}\cdots m_r^{k_r}},\]
which was introduced in \cite[Section 5]{KT2} in connection with a `level 2' 
generalization of the zeta function studied by Arakawa and the first named
author in \cite{AK1999}.  We regard this value as a level 2 multiple zeta value
because of the congruence condition in the summation and of the (easily proved) fact that 
this value can be written as a linear combination of alternating multiple zeta values (also referred to as 
Euler sums or colored multiple zeta values)
\begin{equation}\label{eulersum} \sum_{0<m_1<\cdots <m_r}
\frac{(\pm1)^{m_1}(\pm1)^{m_2}\cdots(\pm1)^{m_r}}{m_1^{k_1}m_2^{k_2}\cdots m_r^{k_r}}.
\end{equation}
It turned out that the value with the normalizing factor $2^r$,
\begin{equation}\label{MTV}  
T(k_1,k_2,\ldots,k_r):=2^r\!\!\!\sum_{0<m_1<\cdots <m_r\atop m_i\equiv i\bmod 2}
\frac1{m_1^{k_1}m_2^{k_2}\cdots m_r^{k_r}},
\end{equation} was more natural and convenient, and we often refer this value as the `multiple $T$-value'  (MTV).

This is in contrast to Hoffman's multiple $t$-value defined by
\begin{equation}\label{hoft} t(k_1,\ldots,k_r)=\sum_{0<m_1<\cdots <m_r\atop \forall m_i\,:\text{ odd}}
\frac1{m_1^{k_1}m_2^{k_2}\cdots m_r^{k_r}},
\end{equation}
which was introduced and studied in his recent paper \cite{Hoffman16} as another variant of 
multiple zeta values of level 2. In the next section and in \S\ref{relTt}, we discuss in more detail 
the comparison between $T$- and $t$-values.

In the case of depth $r=2$, Tasaka and the first named author studied in \cite{KanekoTasaka}
both versions in connection to modular forms of level 2, and gave some results generalizing the previous 
work by Gangle-Kaneko-Zagier \cite{GKZ}. We do not pursue any modular aspects in this paper.

In the following sections, we show several properties of MTVs such as an integral expression, 
the duality relation, certain sum formulas, the parity result,  and the generating series of 
`height one' MTVs, all similar to those properties for classical multiple zeta values (MZVs)
\begin{equation}\label{mzv}
\zeta(k_1,\ldots,k_r)=\sum_{0<m_1<\cdots <m_r}\frac {1}{m_1^{k_1}m_2^{k_2}\cdots m_r^{k_r}},
\end{equation}
and give some conjectures concerning the space spanned by the multiple $T$-values
and a speculation on a basis of multiple zeta values in terms of Hoffman's $t$-values.

%%%%%%%%%%%%%%%%%%%
\section{The space of multiple $T$-values}\label{sec-2}
%%%%%%%%%%%%%%%%%%%

As in the study of multiple zeta values, let us introduce the $\mathbb{Q}$-vector space 
\[ \Tcal=\sum_{k=0}^\infty\,\Tcal_k\] spanned by all MTVs, where
\[  \Tcal_0=\mathbb{Q},\quad \Tcal_1=\{0\},\quad
\Tcal_{k}=\sum_{1\leq r \leq k-1 \atop {{k_1,\ldots,k_{r-1}\geq 1,\ k_r\geq 2} \atop k_1+\cdots+k_r=k}}
\mathbb{Q}\cdot T(k_1,\ldots,k_r)\quad (k\geq 2). 
\]
%Denote the $\mathbb{Q}$-dimension of $\Tcal_k$ by $d_k^{T}$ $(k=0,1,2,\ldots)$. 
The space $\Tcal$ becomes a $\Q$-algebra, the product of two MTVs being described by the 
{\it shuffle product}.  This is clear from the following integral expression of MTVs,
which is exactly parallel to that of multiple zeta values.

For a given tuple of numbers $\varepsilon_i \in \{0,1\}$ ($1\le i\le k$) with $\varepsilon_1=1$
and $\varepsilon_k=0$, set
\begin{equation*}
I(\varepsilon_1,\ldots,\varepsilon_k)=\mathop{\int\cdots\int}\limits_{0<t_1<\cdots <t_k<1}\varOmega_{\varepsilon_1}(t_1)\cdots \varOmega_{\varepsilon_k}(t_k),
\end{equation*}
where
\[\varOmega_0(t)=\frac{dt}{t},\quad \varOmega_1(t)=\frac{2\,dt}{1-t^2}.\]

Recall that an index set ${\bf k}=(k_1,k_2,\ldots,k_r)\in\mathbb{N}^r$ is {\it admissible} if $k_r\ge2$.
This ensures the convergence of the series \eqref{MTV} (as well as \eqref{hoft} and \eqref{mzv}).

\begin{theorem}\label{Th-3-1}\ For any admissible index set $(k_1,k_{2},\ldots,k_{r})$, we have
\begin{equation}
\TT(k_1,k_{2},\ldots,k_{r})=I(1,\underbrace{0,\ldots,0}_{k_1-1},1,\underbrace{0,\ldots,0}_{k_{2}-1},\cdots,1,\underbrace{0,\ldots,0}_{k_r-1}). \label{integ-exp}
\end{equation}
\end{theorem}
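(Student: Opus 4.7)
My plan is to compute the iterated integral on the right-hand side of \eqref{integ-exp} by integrating from the innermost variable outward, processing the integrand in $r$ blocks of shape $\varOmega_1\,\varOmega_0^{k_j-1}$, and matching the outcome termwise with the series \eqref{MTV}. The crucial identity is the geometric expansion
\[
\varOmega_1(t) = \frac{2\,dt}{1-t^2} = 2\sum_{j=0}^{\infty} t^{2j}\,dt,
\]
which implies that integrating $\varOmega_1(u)$ against $u^m$ from $0$ to $x$ yields $\sum_{j\geq 0} 2\,x^{m+2j+1}/(m+2j+1)$; the new exponent therefore ranges over all integers strictly greater than $m$ of the opposite parity. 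Consequently, every occurrence of $\varOmega_1$ flips the parity of the current summation index, which is precisely the content of the congruence $m_i\equiv i\bmod 2$ in \eqref{MTV}.

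More concretely, I would prove by induction on the depth $r$ the following slightly strengthened statement: for any $x\in(0,1]$,
\[
\mathop{\int\cdots\int}\limits_{0<t_1<\cdots<t_k<x}\varOmega_{\varepsilon_1}(t_1)\cdots \varOmega_{\varepsilon_k}(t_k) = \sum_{\substack{m_1<\cdots<m_r\\ m_i\equiv i\bmod 2}}\frac{2^r\,x^{m_r}}{m_1^{k_1}\cdots m_r^{k_r}}.
\]
The base case $r=1$ is direct: integrating $\varOmega_1$ once gives $\sum_{m_1\text{ odd}} 2\,u^{m_1}/m_1$, and each of the subsequent $k_1-1$ integrations of $\varOmega_0=dt/t$ multiplies the coefficient by $1/m_1$ without changing the exponent of the running upper variable. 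The final evaluation at $x$ yields the claimed right-hand side.

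For the inductive step, assume the formula holds at depth $r$ and apply the induction hypothesis to the innermost $k_1+\cdots+k_r$ integrations with an intermediate upper limit $u$, obtaining
\[
\sum_{\substack{m_1<\cdots<m_r\\ m_i\equiv i\bmod 2}}\frac{2^r\,u^{m_r}}{m_1^{k_1}\cdots m_r^{k_r}}.
\]
Next integrate $\varOmega_1(u)$ times this expression from $0$ to the next variable: the geometric expansion together with a termwise integration shows that $m_{r+1}:=m_r+2j+1$ ($j\geq 0$) ranges over integers greater than $m_r$ of opposite parity (hence $m_{r+1}\equiv r+1\bmod 2$), contributing an extra factor $2/m_{r+1}$. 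The remaining $k_{r+1}-1$ integrations of $\varOmega_0$ each contribute a factor $1/m_{r+1}$ and leave the exponent of the running upper variable unchanged, so setting $x=1$ at the end produces $T(k_1,\ldots,k_{r+1})$.

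The individual steps are computationally routine; the only care required is in justifying the termwise integrations and rearrangements of sums, which is straightforward from positivity of all integrands together with the admissibility $k_r\geq 2$ (equivalently, a final $\varOmega_0$), which guarantees absolute convergence at $t=1$. I expect no substantive obstacle beyond bookkeeping the parity alternation across successive blocks.
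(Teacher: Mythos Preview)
Your proposal is correct and follows essentially the same approach as the paper: expand $1/(1-t^2)$ as a geometric series and integrate from the innermost variable outward, exactly as in the standard iterated integral expression for multiple zeta values. The paper gives only a one-sentence sketch of this argument, so your inductive formulation (which in fact recovers the function $\AA(k_1,\ldots,k_r;x)$ appearing later in the paper) is simply a more detailed execution of the same idea.
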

This can be seen by expanding $1/(1-t^2)$ into geometric series and integrate from left to right,
just as in the standard iterated integral expression of the multiple zeta value \eqref{mzv},
which is given in exactly the same form with $\varOmega_1(t)$ replaced by $dt/(1-t)$.  We should 
remark that this integral expression~\eqref{integ-exp} is essentially given in \cite{Sasaki2012}, 
although one needs some change of variables to obtain the current form.

From this integral expression, we immediately see that the 
{\em same} shuffle product rule holds for MTVs as for MZVs, an example being
\[ T(2)^2=4T(1,3)+2T(2,2)\quad \text{and}\quad \zeta(2)^2=4\zeta(1,3)+2\zeta(2,2). \]

Another immediate consequence of the integral expression is the duality.  We state and prove this in the 
next section, but remark here that the formula is again exactly the same as the duality formula of 
ordinary multiple zeta values. \\

Returning to the space $\Tcal$, the first question would be the dimension $d_k^T$ 
over $\Q$ of each subspace $\Tcal_k$ of weight $k$ elements.  
We have conducted numerical experiments with Pari-GP,
and obtained the following conjectural table.
\vspace{7pt}

\begin{center}
\begin{tabular}{|c|r|r|r|r|r|r|r|r|r|r|r|r|r|r|r|r|r|} \hline
$k$ & 0&1&2& 3& 4& 5& 6& 7& 8& 9& 10& 11& 12& 13& 14& 15\\ 
\hline  &&\\[-13pt]\hline
$d_k^T$ & 1& 0& 1& 1& 2& 2& 4& 5& 9& 10& 19& 23& 42& 49& 91& 110\\ 
\hline
\end{tabular}
\end{center}
\vspace{7pt}

Interestingly enough, the Fibonacci-like relation $d_k^T=d_{k-1}^T+d_{k-2}^T$ can be read off 
from the table for {\it even} $k$, but no immediate pattern is recognizable for general $k$.  
Recall that the conjectural dimension of the space of
alternating MZVs of weight $k$ (spanned by the numbers \eqref{eulersum} with all 
possible signs and $k_i$'s with $k_1+\cdots+k_r=k$, with $r$ varying) is given by the Fibonacci number
$F_k$ with $F_0=F_1=1$ and $F_k=F_{k-1}+F_{k-2}$.

We also note that Hoffman in \cite{Hoffman16} conjectures that the dimension
$d_k^t$ of the space spanned by his $t$-values \eqref{hoft} of weight $k$ is given
by the Fibonacci number $F_{k-1}$ (for $k\ge2$).
\vspace{7pt}

\begin{center}
\begin{tabular}{|c|r|r|r|r|r|r|r|r|r|r|r|r|r|r|r|r|r|} \hline
$k$ & 0&1&2& 3& 4& 5& 6& 7& 8& 9& 10& 11& 12& 13& 14& 15\\ 
\hline   &&\\[-13pt]\hline 
$d_k^t$ & 1& 0& 1& 2& 3& 5& 8& 13& 21& 34& 55& 89& 144& 233& 377& 610\\ 
\hline
$F_k$ & 1& 1 & 2& 3& 5& 8& 13& 21& 34& 55& 89& 144& 233& 377& 610 & 987\\ 
\hline
\end{tabular}
\end{center}
\vspace{7pt}

In \S\ref{relTt}, we present some speculations on relations between multiple $T$-values, 
Hoffman's $t$-values, and multiple zeta values.

%%%%%%%%%%%%%%%%%%%%%%%%%%%%%%%%%%%%%%%%%%%%
\section{Several identities among $T$-values}\label{sec-5}
%%%%%%%%%%%%%%%%%%%%%%%%%%%%%%%%%%%%%%%%%%%

In this section, we describe some formulas we have obtained so far.

\subsection{Duality}   Denote by ${\bf k}^\dagger$ the usual dual index of an admissible index set ${\bf k}$.
We assume the reader is familiar with the precise definition of the dual index and omit it here. 
See for instance the textbook of Zhao \cite{Zh}.

\begin{theorem}\label{Th-3-1}\ For any admissible index set ${\bf k}$, we have
\begin{equation}\label{duality}
\TT({\bf k}^\dagger)=\TT({\bf k}).
\end{equation}
\end{theorem}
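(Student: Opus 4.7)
The plan is to derive the duality as an immediate consequence of the integral representation \eqref{integ-exp}, by the very same mechanism that yields duality for ordinary multiple zeta values: a single change of variables that swaps the two $1$-forms and simultaneously reverses the order of the integration variables. Whereas the classical argument uses the involution $t\mapsto 1-t$, here the natural substitution is the M\"obius involution $\varphi(t)=(1-t)/(1+t)$ on $[0,1]$, which is adapted to the geometry of the form $2\,dt/(1-t^2)$.

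First I would verify by direct computation that $\varphi^*\varOmega_0=-\varOmega_1$ and $\varphi^*\varOmega_1=-\varOmega_0$; equivalently, writing $\varOmega_\varepsilon(t)=g_\varepsilon(t)\,dt$ with $g_0(t)=1/t$ and $g_1(t)=2/(1-t^2)$, one checks the pointwise identity
\[ \frac{2}{(1+s)^2}\,g_\varepsilon\!\bigl(\varphi(s)\bigr)=g_{1-\varepsilon}(s)\qquad(\varepsilon=0,1). \]
Next I would apply the change of variables $(t_1,\ldots,t_k)=(\varphi(s_k),\varphi(s_{k-1}),\ldots,\varphi(s_1))$ in \eqref{integ-exp}. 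Since $\varphi$ is a decreasing involution of $[0,1]$, the simplex $\{0<t_1<\cdots<t_k<1\}$ is sent bijectively onto $\{0<s_1<\cdots<s_k<1\}$; the absolute value of the Jacobian of this map equals $\prod_{j=1}^k 2/(1+s_j)^2$, and combined with the identity above it converts $g_{\varepsilon_1}(t_1)\cdots g_{\varepsilon_k}(t_k)\,dt_1\cdots dt_k$ into $g_{1-\varepsilon_k}(s_1)\cdots g_{1-\varepsilon_1}(s_k)\,ds_1\cdots ds_k$. Hence
\[ I(\varepsilon_1,\ldots,\varepsilon_k)=I(1-\varepsilon_k,1-\varepsilon_{k-1},\ldots,1-\varepsilon_1). \]

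The final step is combinatorial: the operation ``reverse the $0/1$-word and exchange $0\leftrightarrow 1$'' sends the word $(1,0^{k_1-1},1,0^{k_2-1},\ldots,1,0^{k_r-1})$ attached to an admissible index $\mathbf{k}$ to the word attached to $\mathbf{k}^\dagger$, and combined with \eqref{integ-exp} this yields \eqref{duality}. The only conceptual point is to discover the right substitution $\varphi$; one finds it by insisting that $\varphi^*\varOmega_1$ be proportional to $\varOmega_0$. After that, all the bookkeeping is formal and runs exactly parallel to the MZV case, so I do not anticipate any serious obstacle.
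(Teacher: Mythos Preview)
Your proof is correct and follows essentially the same approach as the paper: the authors also use the M\"obius involution $t\mapsto (1-t)/(1+t)$, note that it interchanges $\varOmega_0$ and $\varOmega_1$ while reversing $(0,1)$, and conclude $I(\varepsilon_1,\ldots,\varepsilon_k)=I(1-\varepsilon_k,\ldots,1-\varepsilon_1)$ from the change of variables $s_i=(1-t_{k-i+1})/(1+t_{k-i+1})$. Your write-up is slightly more explicit about the pullback computation and the Jacobian bookkeeping, but the underlying argument is identical.
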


\begin{proof}
The involution $t\to (1-t)/(1+t)$, which interchanges the differential forms
$\varOmega_0(t)$ and $\varOmega_1(t)$ and sends the interval $(0,1)$ to itself (with opposite orientation),
plays the role for the involution $t\to 1-t$ in the case of MZVs. That is to say, the change of variables 
\[ s_i=\frac{1-t_{k-i+1}}{1+t_{k-i+1}}\ \ (1\le i\le k) \]
in \eqref{integ-exp} immediately gives
\[ I(\varepsilon_1,\ldots,\varepsilon_k)=I(1-\varepsilon_k,\ldots,1-\varepsilon_1), \]
which is the required duality.
\end{proof}

\subsection{Sum formulas}  
For multiple zeta values, the classical sum formula is widely known and its variants 
are enormous (see \cite[Chapter~5]{Zh} for some of them). For our $T$-values, we only have certain formulas in depths 2 and 3.  In depth~2, we obtain an analogue of the weighted sum formula of Ohno and Zudilin \cite{OZ},
but in depth~3, we only obtain a formula which looks incomplete to be called as a sum formula.

\begin{theorem}\label{Th-sum1} \ 
For $k\in \mathbb{Z}_{\geq 3}$, we have
\begin{equation}
\sum_{j=2}^{k-1} 2^{j-1}\,\TT(k-j,j)=(k-1)\TT(k). \label{SF-2}
\end{equation}
\end{theorem}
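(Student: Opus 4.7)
The plan is to prove \eqref{SF-2} by a direct manipulation of the series \eqref{MTV}. Writing $m_1=2p-1$ (odd) and $m_2=2q$ (even) with $1\le p\le q$ and collecting powers of $2$, one obtains
$$2^{j-1}\,\TT(k-j,j)\;=\;2\sum_{1\le p\le q}\frac{1}{(2p-1)^{k-j}\,q^{j}}.$$
Interchanging the finite sum over $j$ with the (absolutely convergent) double sum over $(p,q)$ and evaluating the inner geometric-type sum
$$\sum_{j=2}^{k-1}x^{k-j}y^{j}=\frac{xy^{2}(x^{k-2}-y^{k-2})}{x-y}\qquad(x\ne y)$$
with $x=1/(2p-1)$, $y=1/q$ reduces the left-hand side of \eqref{SF-2} to a double sum indexed by $1\le p\le q$.

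I first separate off the diagonal $q=2p-1$: there the geometric sum degenerates to $(k-2)/(2p-1)^{k}$, and the diagonal contribution is exactly $(k-2)\,\TT(k)$. On the off-diagonal $q\ne 2p-1$, I apply the partial fraction
$$\frac{1}{q(q-2p+1)}=\frac{1}{2p-1}\Bigl(\frac{1}{q-2p+1}-\frac{1}{q}\Bigr)$$
and split the remaining contribution as $A-B$, where
$$A=2\!\!\sum_{\substack{1\le p\le q\\ q\ne 2p-1}}\!\!\frac{1}{(2p-1)^{k-2}\,q(q-2p+1)},\qquad B=2\!\!\sum_{\substack{1\le p\le q\\ q\ne 2p-1}}\!\!\frac{1}{q^{k-1}(q-2p+1)}.$$

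To compute $A$, I sum over $q$ first (fixing $p$). Splitting the range $q\ge p,\,q\ne 2p-1$ into $q\le 2p-2$ and $q\ge 2p$ and using harmonic-number bookkeeping, the two pieces evaluate to $-H_{2p-2}$ and $H_{2p-1}$ respectively, so the inner sum equals $\frac{H_{2p-1}-H_{2p-2}}{2p-1}=\frac{1}{(2p-1)^{2}}$; hence $A=2\sum_{p\ge1}(2p-1)^{-k}=\TT(k)$. To compute $B$, I fix $q$ and sum over $p$. As $p$ runs through $\{1,\dots,q\}$ (omitting $(q+1)/2$ when it is an integer), the values $r=q-2p+1$ form a set of integers of constant parity that is symmetric about $0$ (with $0$ excluded), so pairing $r\leftrightarrow -r$ gives $B=0$. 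Combining,
$$\sum_{j=2}^{k-1}2^{j-1}\,\TT(k-j,j)=(k-2)\,\TT(k)+\TT(k)=(k-1)\,\TT(k),$$
which is \eqref{SF-2}.

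The technical heart of the argument is the inner-sum computation for $A$: the two harmonic tails from the ranges $q\le 2p-2$ and $q\ge 2p$ individually diverge as their cutoffs tend to infinity, and only their combination collapses to the clean value $1/(2p-1)^{2}$. By contrast, the vanishing of $B$ is a short reflection argument, and absolute convergence of all the double sums involved (for $k\ge 3$) justifies the rearrangements.
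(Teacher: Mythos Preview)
Your argument is correct and self-contained. After the initial rewriting $2^{j-1}T(k-j,j)=2\sum_{1\le p\le q}(2p-1)^{-(k-j)}q^{-j}$, the geometric-sum/partial-fraction decomposition into the diagonal piece $(k-2)T(k)$ and the pieces $A$ and $B$ is valid because, for $k\ge3$, both $A$ and $B$ are absolutely convergent double sums (the inner sums are $O((\log p)/p)$ and $O((\log q)/q)$ respectively), so Fubini justifies summing $A$ over $q$ first and $B$ over $p$ first. Your evaluation of the inner sum of $A$ and the reflection $r\leftrightarrow -r$ for $B$ are both correct. (One expository quibble: in your final paragraph, the two \emph{ranges} $q\le 2p-2$ and $q\ge 2p$ each give a finite value $-H_{2p-2}$, $H_{2p-1}$; what ``individually diverges'' is the pair $\sum_{q\ge 2p}\frac{1}{q-2p+1}$ and $\sum_{q\ge 2p}\frac{1}{q}$ inside the second range. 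This does not affect the argument.)

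This is a genuinely different route from the paper's proof. The paper does not manipulate the series directly; instead it invokes the interpolating function $\psi(1,\ldots,1,2;s)$ from \cite{KT2}, uses its holomorphy at $s=1$ together with the special-value formula $\psi(1,\ldots,1,2;1)=T(1,k-1)$, and computes the limit $\lim_{s\to1}\bigl(-T(k-1,s)+T(k-1)T(s)\bigr)$ by combining a formula of Borwein--Borwein--Girgensohn for an alternating double zeta value with Euler's decomposition of $\zeta(1,k-1)$. This yields the intermediate identity~\eqref{interm}, which is then converted into the weighted form~\eqref{SF-2} via the shuffle expansion of $\sum_{j}T(j)T(k-j)$. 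Your proof is considerably more elementary and needs no external input; the paper's proof, while heavier, produces along the way the unweighted relation~\eqref{interm} and situates the formula in the $\psi$-function framework developed in \cite{KT2}.
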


\begin{theorem}\label{Th-sum2}\ For $k\in \mathbb{Z}_{\geq 4}$,
\begin{equation} \sum_{a+b+c=k \atop a, b\ge 1, c\ge 2}\TT(a,b,c)+\sum_{j=2}^{k-2}\TT(1,k-1-j,j)=\frac{2}{3}\TT(2)\TT(k-2).  \label{SF-3}
\end{equation}
\end{theorem}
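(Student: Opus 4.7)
The plan is to use the shuffle product structure of the integral representation (Theorem~\ref{Th-3-1}). I would start by expanding $T(2)\cdot T(k-2)$ via the shuffle product. Writing $T(2)=I(1,0)$ and $T(k-2)=I(1,0,\ldots,0)$, the $\binom{k}{2}$ shuffles are indexed by the pair of positions $(p,q)$ with $p<q$ occupied by the letters of the shorter word; since both inputs are of depth one, every shuffled word carries exactly two $1$'s and hence corresponds to a depth-$2$ $T$-value. This yields an explicit formula $T(2)T(k-2)=\sum_{a} c_a\,T(a,k-a)$ with integer coefficients $c_a$ depending linearly on $k$.

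Next I would recast the LHS as a sum of iterated integrals. Each admissible $T(a,b,c)$ of weight $k$ corresponds to the integral $I(\varepsilon_1,\ldots,\varepsilon_k)$ with $\varepsilon_i=1$ precisely at positions $1,\,a+1,\,a+b+1$. Under the change of variables $(p_2,p_3)=(a+1,a+b+1)$, the first sum on the LHS becomes the sum over $2\le p_2<p_3\le k-1$ of these depth-$3$ iterated integrals, while the ``bonus'' sum $\sum_{j=2}^{k-2}T(1,k-1-j,j)$ is exactly the sub-sum where $p_2=2$. Thus the LHS equals the sum of these iterated integrals with the $p_2=2$ subfamily counted twice.

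The remaining step is to identify this depth-$3$ integral sum with $\frac{2}{3}\cdot T(2)T(k-2)$. I would attempt this via an integration-by-parts manipulation, using the primitive $\int_0^t\varOmega_1=\log\frac{1+t}{1-t}$, to trade one of the three $\varOmega_1$'s for a product structure that reassembles into the shuffle expansion of $T(2)T(k-2)$. Residual depth-$2$ relations can then be resolved by the weighted sum formula of Theorem~\ref{Th-sum1}. The main obstacle I expect is extracting the precise rational factor $\frac{2}{3}$: since $T$-values admit no natural harmonic (stuffle) product, this factor cannot come from a stuffle-vs-shuffle comparison as in the MZV case but must arise from careful shuffle-combinatorial bookkeeping, i.e., tracking exactly which depth-$2$ values appear (and with what multiplicities) from the integration by parts.
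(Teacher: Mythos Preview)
Your plan does not reach the theorem, and the missing step is not a matter of bookkeeping but of method. The shuffle product $T(2)\,T(k-2)$ is a $\mathbb{Q}$-linear combination of \emph{double} $T$-values only, while the left-hand side is a sum of \emph{triple} $T$-values. No amount of shuffle combinatorics alone relates these two depths; some genuinely new input is required. Your proposed bridge, ``integration by parts to trade one of the three $\varOmega_1$'s for a product structure,'' is not a recognisable manipulation on iterated integrals: integrating out one $\varOmega_1$ against its primitive $\log\frac{1+t}{1-t}$ does not factor the remaining integral into a product of two lower-depth iterated integrals, and you give no indication of how the specific combination on the left (including the doubled $p_2=2$ subfamily) or the constant $\tfrac{2}{3}$ would be forced. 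You yourself flag the $\tfrac{2}{3}$ as the main obstacle and correctly note that no stuffle argument is available here; but you then offer nothing in its place. As written, the proposal is a restatement of the problem in integral language plus a hope.

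The paper's proof is of an entirely different nature and explains where the $\tfrac{2}{3}$ comes from. One first proves a Fourier-analytic identity (Lemma~\ref{Lemma-4-8}): the bilateral sum $\sum_{l\in\mathbb{Z}}\sum_{m,n\ge0}\frac{1}{(2l+1)(2m+1)(2n+1)^q(2l+2m+2n+3)}$ vanishes, a consequence of the square-wave identity $\sum_{l\ge0}\frac{\sin((2l+1)x)}{2l+1}=\frac{\pi}{4}$. Splitting at $l\ge0$ and $l<0$ gives two equal sums $I_1=I_2$. Each is then rewritten via the partial-fraction identities of Lemma~\ref{Lemma-4-9} (and some index shifts that telescope the $m$- and $f$-sums) as $\mathbb{Q}$-combinations of triple $T$-values together with a single $T(2)T(q{+}1)$ term. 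Setting $I_1=I_2$ yields a linear equation of the form $\tfrac{3}{8}\,(\text{LHS})=\tfrac{1}{4}\,T(2)T(q{+}1)$, whence the factor $\tfrac{2}{3}$. None of Theorem~\ref{Th-sum1}, the shuffle expansion of $T(2)T(k-2)$, or any integration by parts enters the argument.
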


We give proofs of these two theorems in the next section, and also present a conjectural (weighted) 
sum formula in depth 3.

\subsection{Parity result}

The so-called \textit{parity result}, proved in the case of  MZVs  in \cite{IKZ,Tsu2004}, 
also holds for MTVs.

\begin{theorem}\label{Th-5-2}\ Let ${\bf k}=(k_1,\ldots,k_{r})$ be an admissible index and assume 
its depth $r$ and weight $k_1+\cdots+k_{r}$ are of different parity. 
Then $\TT ({\bf k})$ can be expressed as a $\mathbb{Q}$-linear combination of multiple $\TT$-values
of lower depths and products of multiple $\TT$-values with sum of depths not exceeding $r$.
\end{theorem}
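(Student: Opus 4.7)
My plan is to adapt Tsumura's proof of the parity theorem for classical MZVs (which also appears in Ihara--Kaneko--Zagier) to the MTV setting. The classical argument rests on two independent families of relations — shuffle relations (from iterated integrals) and a second family coming either from the stuffle product or from a functional equation for a one-variable generating polylogarithm. Theorem~2.1 already furnishes the shuffle product for MTVs, but the congruence condition $m_i\equiv i\pmod 2$ blocks any naive stuffle product (for instance, $T(a)\cdot T(b)$ expands as a sum over pairs $(m,n)$ of odd positive integers, which is not in the MTV form), so I would pursue the functional-equation route.

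Concretely, I would introduce the single-variable multiple polylogarithm
\[
L_T(k_1,\ldots,k_r;z):=\int_0^z\!\varOmega_1\underbrace{\varOmega_0\cdots\varOmega_0}_{k_1-1}\varOmega_1\underbrace{\varOmega_0\cdots\varOmega_0}_{k_2-1}\cdots\varOmega_1\underbrace{\varOmega_0\cdots\varOmega_0}_{k_r-1},
\]
which specializes to $T({\bf k})$ at $z=1$. Expanding $\varOmega_1=2dt/(1-t^2)$ as a geometric series gives a series representation of $L_T({\bf k};z)$ retaining the same parity restriction on indices as the definition of $T({\bf k})$, with an extra factor $z^{m_r}$. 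The key is then to derive a functional equation for $L_T({\bf k};z)$ under a suitable symmetry of its iterated integral. Natural candidates are the duality involution $t\mapsto(1-t)/(1+t)$ used in the proof of Theorem~3.1 (which interchanges $\varOmega_0$ and $\varOmega_1$) and the reflection $t\mapsto -t$ (which preserves $\varOmega_0$ and negates $\varOmega_1$), or a composition of the two. Any such substitution, pulled inside the iterated integral, produces a factor $(-1)^r$ from the $r$ occurrences of $\varOmega_1$.

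After unshuffling the resulting identity and specializing at $z=1$, one compares the two expressions for $T({\bf k})$ — the original series and the one coming from the functional equation (already decomposed by shuffle into $L_T$'s of smaller depth and their products) — to obtain a relation of the schematic form $(1-(-1)^{r+k})T({\bf k})=\text{(lower-depth MTVs and products)}$. This is nontrivial precisely when $r$ and $k$ have opposite parity, in which case it expresses $T({\bf k})$ as a $\Q$-linear combination of MTVs of strictly smaller depth together with products of MTVs whose sum of depths is at most $r$, which is exactly the conclusion of the theorem.

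The main obstacle I anticipate is a closure issue: the reflection $t\mapsto -t$ naturally introduces signs $(-1)^{m_r}$ and thus carries us out of $\Tcal$ into the strictly larger space of alternating Euler sums. The technical heart of the argument will be to combine this reflection with the duality involution of Theorem~3.1 and with shuffle identities so that every alternating contribution either cancels or is rewritten inside the $\Q$-span of multiple $T$-values, and to handle the shuffle regularization at $z=1$ for non-admissible components that show up in intermediate steps. Organizing this bookkeeping cleanly is where I expect most of the work to lie.
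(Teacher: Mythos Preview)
The paper does not actually prove this theorem: it asserts that the result was ``essentially proved'' in \cite{Tsu2007}, where the reduction was initially to a mixture of $T$-values and conductor-$4$ multiple $L$-values of lower depth, and that a careful re-reading of that argument shows it in fact stays within $T$-values; a detailed account is deferred to \cite{Tsu-parity}. The method of \cite{Tsu2004,Tsu2007} is analytic (trigonometric/Fourier series, in the spirit of Lemma~\ref{Lemma-4-8} of the present paper), not the shuffle/functional-equation machinery you outline. So your route is genuinely different, and there is no proof in the paper to compare it against line by line.

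That said, your proposal has a gap beyond the closure issue you already flag. In the classical parity proofs the coefficient $1-(-1)^{k+r}$ in front of the top-depth term appears because two \emph{independent} structures---shuffle and stuffle, or equivalently shuffle together with the functional equation of $\Li_{\mathbf{k}}(z)$ under $z\mapsto 1-z$---are played off against each other. You have the shuffle side, but your proposed second structure does not do the job as stated. The reflection $t\mapsto -t$ carries $(0,z)$ to $(-z,0)$, so it gives no functional equation for $L_T(\mathbf{k};z)$ on $(0,1)$ that you can specialise at $z=1$ without first passing through genuinely alternating sums; and the duality involution $t\mapsto(1-t)/(1+t)$ applied to the full integral at $z=1$ simply reproduces Theorem~\ref{Th-3-1}, a permutation of MTVs of the \emph{same} depth, which by itself cannot force a depth drop or manufacture the sign $(-1)^{k}$. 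What you would actually need is a one-variable identity relating $L_T$ at $z$ and at $(1-z)/(1+z)$, together with an argument that after unshuffling and regularising at $z=1$ the result both isolates the parity and remains inside $\Tcal$. Neither step is addressed, and the paper's own remark---that even the analytic argument in \cite{Tsu2007} first lands in conductor-$4$ $L$-values before one can argue the answer lies in $\Tcal$---indicates that this closure is exactly where the real difficulty sits.
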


This was essentially proved in a previous paper \cite{Tsu2007} of the second named author.
Actually, what we have shown there was a reduction of $T$-values having depth and weight of opposite parity 
into a mixture of $T$-values and a certain multiple $L$-values with the character of conductor~$4$ of lower depth. 
But by checking carefully the proof of \cite[Theorem~1]{Tsu2007},
we see that Theorem~\ref{Th-5-2} is in principle already proved there.  We plan to write a detailed proof
separately in \cite{Tsu-parity}.

\begin{example}\label{Exam-5-3} For the case of depth $2$, we obtain the following formulas.
For $p\ge1$ and $q\ge2$ with $p+q$ odd, we have
\begin{align}
(-1)^q\,\TT(p,q) & =\binom{p+q-1}{q}\TT(p+q)\label{eq-5-1}\\
& \quad -\sum_{\mu=1\atop \mu\equiv q\bmod2}^{q-2}\binom{p+\mu-1}{\mu}
\frac{1}{2^{q-\mu}-1}\TT(p+\mu)\TT(q-\mu)\notag\\
& \quad -\sum_{\mu=0\atop \mu\equiv p\bmod2}^{p-2}
\binom{q+\mu-1}{\mu}\TT(p-\mu)\TT(q+\mu). \notag
\end{align}

We discuss a bit of a special case in depth 3 in \S\ref{relTt}.
%
%
%Furthermore, by the same method as in the proof of \cite[Section 4]{Tsu2007}, we can explicitly compute the triple case, for example,
%\begin{align*}
%\TT(1,1,2) & = \TT(4),   \\
%\TT(1,1,4) & = -\TT(2)\TT(1,3)+\frac{1}{9}\TT(2)^3,     \\
%\TT(1,2,3) & = \TT(3)^2-\TT(2)\TT(2,2),     \\
%\TT(1,3,2) & = - 2\TT(3)^2 +\TT(2)\TT(2,2)+3\TT(2)\TT(1,3),     \\
%\TT(2,1,3) & = -2\TT(3)^2+\TT(2)\TT(2,2) + 3\TT(2)\TT(1,3),   \\
%\TT(2,2,2) & = 4\TT(3)^2-\TT(2)\TT(2,2)-8\TT(2)\TT(1,3),     \\
%\TT(3,1,2) & = -\frac{1}{2}\TT(2)\TT(2,2) +\frac{1}{4}\TT(2)^3. 
%\end{align*}

\end{example}

\subsection{Height one $T$-values}

It is well-known (\cite{Ao, Dr}, see also \cite{Ohno-Zagier}) that the generating function of the `height one' multiple zeta values
is given in terms of the gamma function:
\[ 1-\sum_{m,n=1}^\infty \zeta(\underbrace{1,\ldots,1}_{n-1},m+1)X^m Y^n
=\frac{\Gamma(1-X)\Gamma(1-Y)}{\Gamma(1-X-Y)}.\] 
We can give the following $T$-version of this formula.

\begin{theorem}\label{Th-2-2}  We have the generating series identity
\[  1-\!\!\sum_{m,n=1}^\infty \TT(\underbrace{1,\ldots,1}_{n-1},m+1)X^m Y^n=
\frac{2\,\Gamma(1-X)\Gamma(1-Y)}{\Gamma(1-X-Y)}F(1-X,1-Y;1-X-Y;-1), \]
%\begin{align*}
%& \frac{2\Gamma(1-X)\Gamma(1-Y)}{\Gamma(1-X-Y)}F(1-X,1-Y;1-X-Y;-1)\\
%& \quad =1-\sum_{m,n=1}^\infty \TT(\overbrace{1,\ldots,1}^{n-1},m+1)X^m Y^n,
%\end{align*}
where $F(a,b;c;z)$ is the Gauss hypergeometric function and we assume $|X|<1, -1<Y<0$.
\end{theorem}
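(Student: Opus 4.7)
The plan is to compute the generating series directly via the iterated integral representation of $\TT$-values. Write $S(X,Y)$ for the double sum on the left-hand side. By~\eqref{integ-exp}, the height-one index $(1,\ldots,1,m+1)$ with $n-1$ leading ones corresponds to the word of $n$ ones followed by $m$ zeros, so that
\[ \TT(\underbrace{1,\ldots,1}_{n-1},m+1)=\int_{0<t_1<\cdots<t_{n+m}<1}\prod_{i=1}^n\varOmega_1(t_i)\prod_{j=1}^m\varOmega_0(t_{n+j}). \]
I first perform the innermost $m$ integrations, which yield $(-\log t_n)^m/m!$; summing against $X^m$ produces $t_n^{-X}-1$. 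The outer $n-1$ integrations of $\varOmega_1$ give $\frac{1}{(n-1)!}\bigl(\log\tfrac{1+t_n}{1-t_n}\bigr)^{n-1}$, and summing against $Y^n$ gives $Y\bigl(\tfrac{1+t_n}{1-t_n}\bigr)^Y$. Altogether,
\[ S(X,Y)=Y\int_0^1\frac{2\,du}{1-u^2}\left(\frac{1+u}{1-u}\right)^{Y}\bigl(u^{-X}-1\bigr). \]

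Next I apply the involution $y=(1-u)/(1+u)$ already used in the duality proof. It sends $\tfrac{2du}{1-u^2}$ to $-\tfrac{dy}{y}$, turns $u^{-X}$ into $\bigl(\tfrac{1+y}{1-y}\bigr)^X$ and $\bigl(\tfrac{1+u}{1-u}\bigr)^Y$ into $y^{-Y}$, and reverses the orientation of $(0,1)$. The $-1$ piece contributes $-Y\int_0^1 y^{-Y-1}dy=-1$ (convergent since $Y<0$), whence
\[ 1-S(X,Y)=-Y\int_0^1 y^{-Y-1}(1+y)^X(1-y)^{-X}\,dy. \]

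The final step is to recognize this as Euler's integral for the Gauss hypergeometric function. Taking $(a,b,c,z)=(-X,-Y,1-X-Y,-1)$ in
\[ F(a,b;c;z)=\frac{\Gamma(c)}{\Gamma(b)\Gamma(c-b)}\int_0^1 t^{b-1}(1-t)^{c-b-1}(1-zt)^{-a}\,dt \]
(whose hypotheses $\re(c)>\re(b)>0$ correspond exactly to $|X|<1$ and $-1<Y<0$), the integral equals $\frac{\Gamma(-Y)\Gamma(1-X)}{\Gamma(1-X-Y)}F(-X,-Y;1-X-Y;-1)$, and the prefactor $-Y$ promotes $\Gamma(-Y)$ to $\Gamma(1-Y)$. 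To match the stated form I then invoke the Euler transformation $F(a,b;c;z)=(1-z)^{c-a-b}F(c-a,c-b;c;z)$: with our parameters $c-a-b=1$ and $1-z=2$, it supplies exactly the missing factor $2$ and converts $(-X,-Y)$ into $(1-X,1-Y)$, giving the claimed identity. The only genuine subtlety I anticipate is the careful bookkeeping of orientation and convergence under the substitution $y=(1-u)/(1+u)$; no tool deeper than Euler's integral representation and the Euler transformation for $_2F_1$ is required.
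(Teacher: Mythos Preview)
Your proof is correct and follows essentially the same strategy as the paper: reduce the generating series via \eqref{integ-exp} to a single integral in one variable and identify it with Euler's integral for ${}_2F_1$. The paper is slightly more direct: it stays in the variable $t$, writes $\tfrac{2}{1-t^2}\bigl(\tfrac{1+t}{1-t}\bigr)^Y=2(1-t)^{-Y-1}(1+t)^{Y-1}$, and reads off $F(1-X,1-Y;1-X-Y;-1)$ immediately, whereas your duality substitution $y=(1-u)/(1+u)$ first yields $F(-X,-Y;1-X-Y;-1)$ and then requires Euler's transformation to match---a harmless detour, since Euler's transformation \emph{is} precisely that change of variable in the integral representation. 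One small slip: your claim $-Y\int_0^1 y^{-Y-1}\,dy=-1$ should read $+1$ (for $Y<0$ the integral equals $1/(-Y)$); your subsequent displayed formula for $1-S(X,Y)$ is nonetheless correct.
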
 

\begin{proof}
From the integral expression \eqref{integ-exp}, we have
\begin{align*}
\TT(\underbrace{1,\ldots,1}_{n-1},m+1)&=\mathop{\int\cdots\int}\limits_{0<t_1<\cdots<t_n<u_1<\cdots<u_m<1}
\frac{2dt_1}{1-t_1^2}\,\cdots\,\frac{2t_n}{1-t_n^2}\,\frac{du_1}{u_1}\,\cdots\,\frac{du_m}{u_m} \\
&=\int_{0<t_n<1} \frac1{(n-1)!}\left(\int_0^{t_n} \frac2{1-t^2}\,dt\right)^{n-1}
\frac1{m!}\left(\int_{t_n}^1\frac1u\,du\right)^m\,\frac{2dt_n}{1-t_n^2}\\
%& =\int_0^1 \frac{2dt_n}{1-t_n^2}\int_0^{t_n}\frac{1+t_{n-1}^2}{1-t_{n-1}^2}\cdots \int_0^{t_2}\frac{2dt_1}{1-t_1^2}\int_{t_n}^{1}\frac{du_1}{u_1}\cdots \int_{u_{m-1}}^{1}\frac{du_m}{u_m}\\
& =\frac{1}{(n-1)!\,m!}\int_0^{1}\left\{\log\left( \frac{1+t_n}{1-t_n}\right)\right\}^{n-1}
\left\{\log\left( \frac{1}{t_n}\right)\right\}^{m}\frac{2dt_n}{1-t_n^2}.
\end{align*}
Hence we have
\begin{align*}
& \sum_{m,n=1}^\infty \TT(\underbrace{1,\ldots,1}_{n-1},m+1)X^m Y^{n-1}\\
& \quad = \int_0^1 \left( \frac{1+t}{1-t}\right)^Y(t^{-X}-1)\frac{2dt}{1-t^2}\\
& \quad = 2\int_0^1 t^{-X}(1-t)^{-Y-1}(1+t)^{Y-1}dt -\int_0^1 \left( \frac{1+t}{1-t}\right)^Y\frac{2dt}{1-t^2}.
\end{align*}
Denote the two integrals on the last line by $I_1$ and $I_2$ respectively. It follows from
the Euler integral
$$F(a,b;c;z)=\frac{\Gamma(c)}{\Gamma(a)\Gamma(c-a)}\int_0^1 t^{a-1}(1-t)^{c-a-1}(1-zt)^{-b}dt\quad (0<\Re a <\Re c)$$
that 
$$I_1=\frac{\Gamma(1-X)\Gamma(-Y)}{\Gamma(1-X-Y)}F(1-X,1-Y;1-X-Y;-1).$$
As for $I_2$, setting $w=\log\bigl((1+t)/(1-t)\bigr)$, 
we have 
$$I_2=\int_0^\infty e^{Yw}dw=-\frac{1}{Y}\quad (\text{if}\ Y<0).$$
Thus, multiplying $-Y$ and using $(-Y)\Gamma(-Y)=\Gamma(1-Y)$,  we obtain the desired formula.
\end{proof}

%%%%%%%%%%%%%%%%%%%%%%%%%%%%%%%%%%%%%%%%%%%%
\section{Proofs of the sum formulas}
%%%%%%%%%%%%%%%%%%%%%%%%%%%%%%%%%%%%%%%%%%%

In this section, we prove Theorems~\ref{Th-sum1} and \ref{Th-sum2}, and give a conjectural sum formula
for depth 3.

\subsection{Proof of Theorem~\ref{Th-sum1}}

We use two formulas of the function
\begin{align}
& \psi(k_1,\ldots,k_r;s) =\frac{1}{\Gamma(s)}\int_0^\infty t^{s-1}\frac{\AA(k_1,\ldots,k_{r};\tanh (t/2))}{\sinh(t)}\,dt\label{ee-6-1}
\end{align}
which was studied in our previous paper \cite{KT2}.  Here, $\AA(k_1,\ldots,k_r;z)$ is 
given by 
\[
\AA(k_1,\ldots,k_r;z)=2^r\sum_{0<m_1<\cdots <m_r\atop m_i\equiv i\bmod 2}
\frac{z^{m_r}}{m_1^{k_1}\cdots m_{r}^{k_{r}}}.
\]
(In \cite{KT2}, $2^{-r}\AA(k_1,\ldots,k_r;z)$ is denoted by ${\rm Ath}(k_1,\ldots,k_r;z)$.)
The formulas we need are special cases of \cite[Theorems~5.3 and 5.5]{KT2}, which read
(by letting $k=2$, $r\to k-2$ and $m=0$)
\begin{align}
& \psi(\underbrace{1,\ldots,1}_{k-3},2;s)  \label{psi}\\
&\quad =-\sum_{j=2}^{k-1}
\binom{s+j-2}{j-1}\,\TT(k-j,j-1+s)-\TT(k-1,s)+\TT(k-1)\TT(s) \notag
\end{align}
and 
\begin{equation}\label{psivalue} 
\psi(\underbrace{1,\ldots,1}_{k-3},2;1)=\TT(1,k-1).
\end{equation}
We also use the fact that the function $\psi(\underbrace{1,\ldots,1}_{k-3},2;s)$ is holomorphic everywhere.
Since each function $\TT(k-j,j-1+s)$ in the sum on the right of \eqref{psi} is holomorphic at $s=1$, 
the remaining sum $-\TT(k-1,s)+\TT(k-1)\TT(s)$ should be holomorphic at $s=1$ (each of 
$\TT(k-1,s)$ and $\TT(k-1)\TT(s)$ has a pole of order 1 at $s=1$). To evaluate 
the value of $-\TT(k-1,s)+\TT(k-1)\TT(s)$ at $s=1$, we compute the `stuffle 
product' 
\begin{align}\label{zetaeo}
&\frac12\,T(k-1)\cdot 2^{-s}\zeta(s) \\
&=\sum_{m=1\atop m:\text{odd}}\frac1{m^{k-1}}\,\sum_{n=2\atop n:\text{even}}\frac1{n^s}
=\sum_{0<m<n\atop m:\text{odd,}\,n:\text{even}}\frac1{m^{k-1}n^s}+
\sum_{0<n<m\atop n:\text{even,}\,m:\text{odd}}\frac1{n^s m^{k-1}}\notag \\ 
&=\frac14\,T(k-1,s)+\zeta^{eo}(s,k-1), \notag
\end{align}
($\zeta^{eo}(s,k-1)$ is the last sum in \eqref{zetaeo})
from which we have 
\begin{align*}  
&-\TT(k-1,s)+\TT(k-1)\TT(s) \\
&\quad = 4\zeta^{eo}(s,k-1)- 2T(k-1)\cdot 2^{-s}\zeta(s)+\TT(k-1)\TT(s)  \\
&\quad = 4\zeta^{eo}(s,k-1)- 2T(k-1)\cdot 2^{-s}\zeta(s)+\TT(k-1)\cdot 2(1-2^{-s})\zeta(s)\\
&\quad = 4\zeta^{eo}(s,k-1)+\TT(k-1)\cdot 2(1-2^{1-s})\zeta(s).
\end{align*}
We then see that $\zeta^{eo}(s,k-1)$ is finite at $s=1$ and so is
\[ (1-2^{1-s})\zeta(s)=1-\frac1{2^s}+\frac1{3^s}-\frac1{4^s}+\cdots \]
whose value at $s=1$ is $\log2$.  Hence we have
\[ \lim_{s\to 1} \left(-\TT(k-1,s)+\TT(k-1)\TT(s)\right)=4\zeta^{eo}(1,k-1)+
(2\log2)\TT(k-1).  \]

To compute the value $\zeta^{eo}(1,k-1)$, we consider the specific alternating MZV
\begin{equation}
\sigma_a(1,k-1)=\sum_{1\leq m<n}\frac{(-1)^{m-1}}{m n^{k-1}}=\left(1-2^{-k+1}\right)\zeta(1,k-1)
-2{\zeta}^{eo}(1,k-1).\label{eq-4-1}
\end{equation}
We use the formula by Borwein et al \cite[\S 4]{Borwein95} (we are using the notation there)
\begin{align*}
\sigma_a(1,k-1)&=(2\log 2)(1-2^{-k+1})\zeta(k-1)-\frac{k-1}2\zeta(k)\\
&+\frac12\sum_{j=2}^{k-2}(1-2^{1-j})(1-2^{j-k+1})\zeta(j)\zeta(k-j)
\end{align*}
and by Euler
\begin{equation}\label{Euler} 
\zeta(1,k-1)=\frac{k-1}2\zeta(k)-\frac12\sum_{j=2}^{k-2} \zeta(j)\zeta(k-j) 
\end{equation} 
to conclude
\begin{align*}
4{\zeta}^{eo}(1,k-1)&=2(1-2^{-k+1})\zeta(1,k-1)-2\sigma_a(1,k-1)\\
&=(2-2^{-k+1})(k-1)\zeta(k)-(4\log 2)(1-2^{-k+1})\zeta(k-1)\\
& \ -\sum_{j=2}^{k-2}\left\{(1-2^{-k+1})+(1-2^{1-j})(1-2^{j-k+1})\right\}\zeta(j)\zeta(k-j)\\
& =(k-1)\TT(k)-(2\log 2)\TT(k-1)-\frac{1}{2}\sum_{j=2}^{k-2}\TT(j)\TT(k-j).
\end{align*}
We have used $2(1-2^{-m})\zeta(m)=\TT(m)$ and 
\[(1-2^{-k+1})+(1-2^{1-j})(1-2^{j-k+1})=2(1-2^{-j})(1-2^{-k+j}).\]
We therefore have 
\[
\lim_{s\to 1} \left(-\TT(k-1,s)+\TT(k-1)\TT(s)\right)=(k-1)\TT(k)
-\frac{1}{2}\sum_{j=2}^{k-2}\TT(j)\TT(k-j),  \]
and by letting $s\to1$ in \eqref{psi} together with \eqref{psivalue} we obtain
\begin{equation}
\sum_{j=2}^{k-1} \TT(k-j,j)+\TT(1,k-1) =(k-1)\TT(k)-\frac{1}{2}\sum_{j=2}^{k-2} \TT(j)\TT(k-j). \label{interm}
\end{equation}
Now, recall the shuffle product expansion of $\TT(j)\TT(k-j)$ has the same form as that of 
$\zeta(j)\zeta(k-j)$ given in {\em e.g.} \cite[p.~72, (3)]{GKZ}, which is
$$\TT(j)\TT(k-j)=\sum_{\nu=2}^{k-1}\left\{\binom{\nu-1}{j-1}+\binom{\nu-1}{k-j-1}\right\}\TT(k-\nu,\nu).$$
Summing up, we obtain
\begin{align}\label{sumprodT}
\frac12\sum_{j=2}^{k-2}\TT(j)\TT(k-j)&=\frac12\sum_{\nu=2}^{k-1}\left(\sum_{j=2}^{k-2}\left\{\binom{\nu-1}{j-1}
+\binom{\nu-1}{k-j-1}\right\}\right)\TT(k-\nu,\nu) \\
&=\sum_{\nu=2}^{k-1}\left(\sum_{j=2}^{k-2}\binom{\nu-1}{j-1}\right)\TT(k-\nu,\nu)\notag\\
&=\sum_{\nu=2}^{k-2}(2^{\nu-1}-1)\TT(k-\nu,\nu)+(2^{k-2}-2)\TT(1,k-1).\notag
\end{align}
Here, we have used
\begin{equation*}
\sum_{j=2}^{k-2}\binom{\nu-1}{j-1}=
\begin{cases}
2^{\nu-1}-1 & (\nu\leq k-2),\\
2^{k-2}-2 & (\nu=k-1).
\end{cases}
\end{equation*}
Combining \eqref{interm} and \eqref{sumprodT}, we obtain Theorem~\ref{Th-sum1}.

\begin{remark}\label{Remark-4-7}\ 
The weighted sum formula for the double zeta values is
\[ \sum_{j=2}^{k-1} 2^{j-1}\,\zeta(k-j,j)=\frac{k+1}{2}\zeta(k). \]
This can also be proved in the same manner as in the above last step by starting with Euler's \eqref{Euler}
and expressing $\zeta(j)\zeta(k-j)$ as a sum of double zeta values by the shuffle 
product, and by using the ordinary sum formula.
\end{remark}

%%%%%%%%%%%%%%%%%%%%%%%%%%%%%%%%%%%%%%%%%%%%
\subsection{Triple $\TT$-values}\label{sec-5-2}
%%%%%%%%%%%%%%%%%%%%%%%%%%%%%%%%%%%%%%%%%%%

The method of proof here is different from that in the previous subsection and uses 
partial fraction decompositions.
We start with a lemma.

\begin{lemma}\label{Lemma-4-8}\ For $q\in \mathbb{N}$, it holds
\begin{align}
&\sum_{l=-\infty}^\infty \sum_{m,n= 0}^\infty \frac{1}{(2l+1)(2m+1)(2n+1)^q(2l+2m+2n+3)}=0. \label{eq-4-2}
\end{align}
\end{lemma}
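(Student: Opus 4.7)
The plan is to reduce the triple sum to a one-variable identity via partial fractions and a Fubini argument. Specifically, I would first prove the companion identity
\[
\sum_{l=-\infty}^\infty \frac{1}{(2l+1)(2l+2M+1)} = 0 \qquad (M \in \mathbb{N}),
\]
and then apply it with $M = m+n+1$ after interchanging the orders of summation.

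For the one-variable identity, the natural tool is the decomposition
\[
\frac{1}{(2l+1)(2l+2M+1)} = \frac{1}{2M}\left(\frac{1}{2l+1} - \frac{1}{2l+2M+1}\right).
\]
Summing symmetrically from $-L$ to $L$ and shifting the index $l \mapsto l-M$ in the second piece leaves a boundary contribution of $2M$ terms of size $O(1/L)$, which tends to $0$ as $L \to \infty$. Since each summand is $O(l^{-2})$, the series converges absolutely, so the symmetric limit coincides with the true value.

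With the one-variable identity in hand, the lemma formally reduces to swapping the $l$-sum with the $(m,n)$-sum: after the swap, the inner sum
\[
\sum_{l\in\mathbb{Z}}\frac{1}{(2l+1)(2l+2(m+n+1)+1)}
\]
vanishes for every $(m,n)$. To justify Fubini, I would establish absolute convergence of the triple series by first showing
\[
\sum_{l \in \mathbb{Z}} \frac{1}{|2l+1|\,|2l+2M+1|} = O\!\left(\frac{\log M}{M}\right)
\]
(via the same partial fraction, separating $|2l+1|\le M$ from $|2l+1|>M$), and then verifying
\[
\sum_{m,n\ge 0}\frac{\log(m+n+1)}{(2m+1)(2n+1)^q(m+n+1)}<\infty
\]
by decomposing into $m\le n$ and $m>n$ and using elementary harmonic-sum bounds.

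The main technical step is this absolute-convergence verification, since the factor $\log(m+n+1)/(m+n+1)$ only barely decays and must be balanced against the decay supplied by $(2m+1)$ and $(2n+1)^q$, where the hypothesis $q\ge 1$ is essential. Once Fubini is justified, the vanishing of the inner $l$-sum yields the lemma immediately.
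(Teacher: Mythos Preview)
Your argument is correct and complete: the one-variable telescoping identity holds for every $M\ge1$, the bound $\sum_{l\in\mathbb{Z}}|2l+1|^{-1}|2l+2M+1|^{-1}=O((\log M)/M)$ is valid (the middle range $-M\le l\le -1$ contributes a term $\frac{1}{M}\sum_{j=1}^{M}\frac{1}{2j-1}$ of the same order), and the resulting $(m,n)$-sum converges for $q\ge1$ exactly as you outline. Absolute convergence then legitimizes Fubini and the symmetric-limit interpretation simultaneously.

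The paper takes a rather different route. Instead of partial fractions and Fubini, it starts from the Fourier identity $\sum_{l\ge0}\frac{\sin((2l+1)x)}{2l+1}=\frac{\pi}{4}$, rewrites it as $\sum_{l\in\mathbb{Z}}\frac{(-1)^l e^{(2l+1)i\theta}}{2l+1}=\frac{\pi}{2}$, multiplies by an auxiliary generating series $\sum_{m,n\ge0}\frac{(-z)^{m+n}e^{(2m+2n+2)i\theta}}{(2m+1)(2n+1)^q}$, integrates in $\theta$ over $[-t,t]$, and finally lets $t\to\pi/2$ and $z\to1$. The integration step is what produces the factor $(2l+2m+2n+3)^{-1}$. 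Your approach is more elementary and self-contained, avoiding the Fourier machinery, the regularizing parameter $z$, and the limiting argument; the paper's approach, while more elaborate for this particular lemma, fits a template that it reuses (cf.\ the remark following the proof of Theorem~\ref{Th-sum2}, where the same Fourier identity is multiplied by a single sum to recover the depth-$2$ formula).
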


\begin{proof}
It is well-known that
$$\sum_{l=0}^\infty \frac{\sin((2l+1)x)}{2l+1}=\frac{\pi}{4},$$
which is uniformly convergent for $0<x<\pi$ 
(see \cite[\S 2.2]{Titch}). Setting $x=\pi/2+\theta$, we have
\begin{equation*}
\lim_{L\to \infty}\sum_{l=-L}^L \frac{(-1)^l e^{(2l+1)i\theta}}{2l+1}=\frac{\pi}{2}\quad \left(-\frac{\pi}{2}<\theta<\frac{\pi}{2}\right), %\label{eq-4-2}
\end{equation*}
where $i=\sqrt{-1}$. For simplicity we write the limit of the left-hand side as $\sum_{l=-\infty}^{\infty}$.
Hence, for $q\in \mathbb{N}$ and $z\in (-1,1)$, we have
\begin{align*}
0=&\left(\sum_{l=-\infty}^\infty \frac{(-1)^l e^{(2l+1)i\theta}}{2l+1}-\frac{\pi}{2}\right)\sum_{m,n=0}^\infty \frac{(-z)^{m+n}e^{(2m+2n+2)i\theta}}{(2m+1)(2n+1)^q}\\
&=\sum_{l=-\infty}^\infty \frac{(-1)^l}{2l+1}\sum_{m,n= 0}^\infty  \frac{(-z)^{m+n}e^{(2l+2m+2n+3)i\theta}}{(2m+1)(2n+1)^q}-\frac{\pi}{2}\sum_{m,n=0}^\infty \frac{(-z)^{m+n}e^{(2m+2n+2)i\theta}}{(2m+1)(2n+1)^q}
\end{align*}
for $-\pi/2<\theta<\pi/2$. 
Integrating the both sides from $-t$ to $t$ $(-\pi/2<t<\pi/2)$, we obtain
\begin{align*}
0=&2 \sum_{l=-\infty}^\infty \frac{(-1)^l}{2l+1}\sum_{m,n= 0}^\infty  \frac{(-z)^{m+n}\sin((2l+2m+2n+3)t)}{(2m+1)(2n+1)^q(2l+2m+2n+3)}\\
& \quad -\pi\sum_{m,n=0}^\infty \frac{(-z)^{m+n}\sin((2m+2n+2)t)}{(2m+1)(2n+1)^q(2m+2n+2)}.
\end{align*}
We can easily see that the right-hand side is absolutely and uniformly convergent for $t\in [-\pi/2,\pi/2]$ and $z\in [-1,1]$. Hence, letting $t\to \pi/2$ and $z\to 1$, we obtain \eqref{eq-4-2}.
\end{proof}

We can write \eqref{eq-4-2} as 
\begin{align}
&\sum_{l,m,n\geq 0} \frac{1}{(2l+1)(2m+1)(2n+1)^q(2l+2m+2n+3)}\label{eq-4-3} \\
& \quad -\sum_{l,m,n\geq 0} \frac{1}{(2l+1)(2m+1)(2n+1)^q(-2l+2m+2n+1)}=0 .\notag
\end{align}
Denote the sums on the left-hand side by $I_1$ and $I_2$ respectively, so that $I_1=I_2$ holds. 
We shall write down each $I_1$ and $I_2$ in terms of $\TT$-values, by using the following partial fraction decomposition formulas.

\begin{lemma}\label{Lemma-4-9}\ For $q\in \mathbb{N}$,
\begin{align}
\frac{1}{xy^q}&=\sum_{j=0}^{q-1}\frac{1}{y^{q-j}(x+y)^{j+1}}+\frac{1}{x(x+y)^q}, \label{eq-4-4}\\
\frac{1}{xyz^q}&=\sum_{j=0}^{q-1}\bigg\{\sum_{\nu=0}^{q-j-1}\frac{1}{z^{q-j-\nu}(x+z)^{\nu+1}(x+y+z)^{j+1}}+\frac{1}{x(x+z)^{q-j}(x+y+z)^{j+1}}\label{eq-4-5}\\
& \qquad +\sum_{\nu=0}^{q-j-1}\frac{1}{z^{q-j-\nu}(y+z)^{\nu+1}(x+y+z)^{j+1}}+\frac{1}{y(y+z)^{q-j}(x+y+z)^{j+1}}\bigg\} \notag\\
& \quad +\frac{1}{x(x+y)(x+y+z)^q}+\frac{1}{y(x+y)(x+y+z)^q}. \notag
\end{align}
\end{lemma}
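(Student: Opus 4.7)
The plan is to prove \eqref{eq-4-4} first and then derive \eqref{eq-4-5} by repeatedly applying \eqref{eq-4-4} to collapse groups of terms on its right-hand side.

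For \eqref{eq-4-4}, the quickest route is direct verification. Multiplying the desired identity by $xy^q$ and setting $t = y/(x+y)$ (so $x/(x+y) = 1-t$), the right-hand side becomes $(1-t)(1+t+\cdots+t^{q-1}) + t^q = (1-t)\cdot\frac{1-t^q}{1-t} + t^q = 1$, as required. Equivalently, one may prove it by induction on $q$ starting from the basic identity $\frac{1}{xy} = \frac{1}{y(x+y)} + \frac{1}{x(x+y)}$ and iterating.

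For \eqref{eq-4-5}, the key observation is that the six families of summands on the right regroup into recognizable subsums. For each fixed $j\in\{0,\ldots,q-1\}$, the partial sum $\sum_{\nu=0}^{q-j-1}\frac{1}{z^{q-j-\nu}(x+z)^{\nu+1}} + \frac{1}{x(x+z)^{q-j}}$ is exactly the right-hand side of \eqref{eq-4-4} with $y$ replaced by $z$ and $q$ by $q-j$, hence equals $\frac{1}{xz^{q-j}}$. By the analogous substitution with $x$ and $y$ swapped, $\sum_{\nu=0}^{q-j-1}\frac{1}{z^{q-j-\nu}(y+z)^{\nu+1}} + \frac{1}{y(y+z)^{q-j}} = \frac{1}{yz^{q-j}}$. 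Finally the two stand-alone terms combine via the basic identity as $\frac{1}{x(x+y)} + \frac{1}{y(x+y)} = \frac{1}{xy}$, giving $\frac{1}{xy(x+y+z)^q}$. After these three collapses the whole right-hand side reduces to
\[ \left(\frac{1}{x}+\frac{1}{y}\right)\sum_{j=0}^{q-1}\frac{1}{z^{q-j}(x+y+z)^{j+1}} + \frac{1}{xy(x+y+z)^q}. \]

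To finish, I apply \eqref{eq-4-4} once more with $x\to x+y$ and $y\to z$, which yields $\sum_{j=0}^{q-1}\frac{1}{z^{q-j}(x+y+z)^{j+1}} = \frac{1}{(x+y)z^q} - \frac{1}{(x+y)(x+y+z)^q}$. Multiplying by $\frac{x+y}{xy} = \frac{1}{x}+\frac{1}{y}$ and adding the leftover $\frac{1}{xy(x+y+z)^q}$ causes the two $(x+y+z)^q$ terms to cancel, leaving $\frac{1}{xyz^q}$. The only real obstacle is the bookkeeping of the double indices $(j,\nu)$; once one notices that the inner sum over $\nu$ is itself an instance of \eqref{eq-4-4}, the outer sum over $j$ disappears after a single further application of the same identity, so no genuine computation beyond the basic partial fraction step is needed.
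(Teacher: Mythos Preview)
Your proof is correct and follows essentially the same route as the paper. For \eqref{eq-4-4} the paper uses the factorization $\frac{1}{y^q}-\frac{1}{(x+y)^q}=\frac{x}{y(x+y)}\sum_{j=0}^{q-1}\frac{1}{y^{q-1-j}(x+y)^j}$, which is your geometric-series computation rewritten; for \eqref{eq-4-5} the paper runs your argument in the forward direction---starting from $\frac{1}{xyz^q}$, applying \eqref{eq-4-4} with $x\to x+y$, $y\to z$, multiplying by $\frac{1}{x}+\frac{1}{y}$, and then expanding $\frac{1}{xz^{q-j}}$ and $\frac{1}{yz^{q-j}}$---whereas you collapse the right-hand side back to the left, but the intermediate expression $\bigl(\frac{1}{x}+\frac{1}{y}\bigr)\sum_{j=0}^{q-1}\frac{1}{z^{q-j}(x+y+z)^{j+1}}+\frac{1}{xy(x+y+z)^q}$ and the two uses of \eqref{eq-4-4} are identical in both.
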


\begin{proof}
Equation \eqref{eq-4-4} immediately follows from the factorization
\[ \frac1{y^q} -\frac1{(x+y)^q}=\left(\frac1y-\frac1{x+y}\right)
\sum_{j=0}^{q-1}\frac1{y^{q-1-j}(x+y)^j}=\frac{x}{y(x+y)}\sum_{j=0}^{q-1}\frac1{y^{q-1-j}(x+y)^j}. \]
Replacing $y$ by $z$ and $x$ by $x+y$ in \eqref{eq-4-4} and then multiplying $(x+y)/xy=1/x+1/y$, we have
$$
\frac{1}{xyz^q}=\sum_{j=0}^{q-1}\left(\frac{1}{xz^{q-j}}+\frac{1}{yz^{q-j}}\right)\frac{1}{(x+y+z)^{j+1}}+\frac{1}{xy(x+y+z)^q}. 
$$
Applying \eqref{eq-4-4} to $1/xz^{q-j}$ and $1/yz^{q-j}$ and writing $1/xy$ as $1/x(x+y)+1/y(x+y)$
in the last term, we obtain \eqref{eq-4-5}.
\end{proof}

\noindent {\it Proof of Theorem~\ref{Th-sum2}.}\quad
Using  \eqref{eq-4-5} with $x=2l+1, y=2m+1, z=2n+1$, we readily have
(note $2l+2m+2n+3=x+y+z$)
\begin{align}
I_1&=\frac14\sum_{j=0}^{q-1}\left\{\sum_{\nu=0}^{q-1-j}\TT(q-j-\nu,\nu+1,j+2)+
\TT(1,q-j,j+2)\right\}+\frac14\TT(1,1,q+1)\label{I1}\\
&=\frac14\sum_{a+b+c=q+3 \atop a, b\ge 1, c\ge 2}\TT(a,b,c)+\frac14\sum_{j=2}^{q+1}\TT(1,q+2-j,j)
+\frac14\TT(1,1,q+1).\notag
\end{align}
As for $I_2$, set $d=n-l$ or $e=l-n$ according as $l<n$ or $l\geq n$. Then 
\begin{align}
I_2=&\sum_{d\geq 1 \atop l,m\geq 0} \frac{1}{(2l+1)(2m+1)(2d+2l+1)^q(2d+2m+1)}\label{I2}\\
& +\sum_{e,m,n\geq 0} \frac{1}{(2e+2n+1)(2m+1)(2n+1)^q(-2e+2m+1)}.\notag
\end{align}
The first sum on the right is equal to
\begin{align*}
&\sum_{d\geq 1 \atop l\geq 0} \frac{1}{(2l+1)(2d+2l+1)^q}\,\frac1{(2d)}\sum_{m=0}^\infty \left(\frac{1}{2m+1}-\frac{1}{2m+2d+1}\right)\\
&=\sum_{d\geq 1, l\geq 0 \atop 0\le m\le d-1} \frac{1}{(2l+1)(2d+2l+1)^q(2d)(2m+1)}\\
& =\sum_{l,m,k\geq 0} \frac{1}{(2l+1)(2m+1)(2m+2k+2)(2l+2m+2k+3)^q}\quad (d=m+k+1)\\
& = \sum_{l,m,k\geq 0} \frac{1}{(2l+1)(2m+1)(2l+2m+2k+3)^{q+1}}\\
& \qquad +\sum_{l,m,k\geq 0} \frac{1}{(2m+1)(2m+2k+2)(2l+2m+2k+3)^{q+1}}\\
&=\sum_{l,m,k\geq 0} \frac{1}{(2l+1)(2l+2m+2)(2l+2m+2k+3)^{q+1}}\\
&\qquad+\sum_{l,m,k\geq 0} \frac{1}{(2m+1)(2l+2m+2)(2l+2m+2k+3)^{q+1}}\\
& \qquad\qquad +\sum_{l,m,k\geq 0} \frac{1}{(2m+1)(2m+2k+2)(2l+2m+2k+3)^{q+1}}\\
&=\frac38\,\TT(1,1,q+1).
\end{align*}
The second sum in \eqref{I2} is, by setting $f=m-e$ or $g=e-m$ according as 
$e\leq m$ or $e>m$, transformed into
\begin{align*}
&\sum_{e,f,n\geq 0} \frac{1}{(2e+2n+1)(2e+2f+1)(2n+1)^q(2f+1)}\\
& +\sum_{g\geq 1 \atop m,n\geq 0} \frac{1}{(2g+2m+2n+1)(2m+1)(2n+1)^q(-2g+1)}.
\end{align*}
The second sum of this is equal to $-I_1$ as seen by setting $g=l+1$. We write the first sum,
first by separating the terms with $e=0$ and $e>0$, as
\begin{align*}
&\sum_{f,n\geq 0} \frac{1}{(2f+1)^2(2n+1)^{q+1}}
+\sum_{e\geq 1, f,n\geq 0} \frac{1}{(2e+2n+1)(2n+1)^q(2e+2f+1)(2f+1)}\\
&=\frac14 \TT(2)\TT(q+1) +\sum_{e\geq 1 \atop n\geq 0} \frac{1}{(2e+2n+1)(2n+1)^q}\,\frac1{(2e)}
\sum_{f=0}^\infty \left(\frac{1}{2f+1}-\frac{1}{2f+2e+1}\right)\\
&=\frac14 \TT(2)\TT(q+1) +\sum_{e\geq 1, n\geq 0 \atop 0\le f\le e-1} 
\frac{1}{(2e+2n+1)(2n+1)^q(2e)(2f+1)}\\
& =\frac14 \TT(2)\TT(q+1) 
+\sum_{f,l,n\geq 0}\frac{1}{(2f+1)(2f+2l+2)(2n+1)^q(2f+2l+2n+3)}
\end{align*}
($e=f+l+1$).  Using \eqref{eq-4-4} repeatedly, we have
\begin{align*}
& \sum_{f,l,n\geq 0}\frac{1}{(2f+1)(2f+2l+2)(2n+1)^q(2f+2l+2n+3)}\\
& = \sum_{f,l,n\geq 0}\bigg\{ \sum_{j=0}^{q-1}\frac{1}{(2f+1)(2n+1)^{q-j}(2f+2l+2n+3)^{j+2}}\\
&\qquad +\frac{1}{(2f+1)(2f+2l+2)(2f+2l+2n+3)^{q+1}}\bigg\}\\
& = \sum_{f,l,n\geq 0}\sum_{j=0}^{q-1}\bigg\{ \sum_{\nu=0}^{q-j-1}\frac{1}{(2n+1)^{q-j-\nu}(2f+2n+2)^{\nu+1}
(2f+2l+2n+3)^{j+2}}\\
&\qquad\qquad +\frac1{(2f+1)(2f+2n+2)^{q-j}(2f+2l+2n+3)^{j+2}}\biggr\}\\
&\qquad +\frac18 \TT(1,1,q+1)\\
&=\frac18\sum_{j=0}^{q-1}\left\{\sum_{\nu=0}^{q-1-j}\TT(q-j-\nu,\nu+1,j+2)
+\TT(1,q-j,j+2)\right\} + \frac18\TT(1,1,q+1)\\
&=\frac18\sum_{a+b+c=q+3 \atop a, b\ge 1, c\ge 2}\TT(a,b,c)+\frac18\sum_{j=2}^{q+1}\TT(1,q+2-j,j)
+ \frac18\TT(1,1,q+1).
\end{align*}
We therefore have 
\begin{align*} 
I_2&=\frac38\,\TT(1,1,q+1)-I_1+ \frac14 \TT(2)\TT(q+1)\\
&+\frac18\sum_{a+b+c=q+3 \atop a, b\ge 1, c\ge 2}\TT(a,b,c)+\frac18\sum_{j=2}^{q+1}\TT(1,q+2-j,j)
+ \frac18\TT(1,1,q+1). 
\end{align*}
Combining this and \eqref{I1} together with $I_1=I_2$ and setting $q+3=k$ gives the theorem. \hfill \qed

\begin{example}\label{Ex-3-3}
The case $k=5$ of Theorem~\ref{Th-sum2} is
\begin{align}
& 2\TT(1,1,3)+2\TT(1,2,2)+\TT(2,1,2)=\frac{2}{3}\TT(2)\TT(3). \label{SFwt5}
\end{align}
This is not quite parallel to the case of ordinary MZVs, where the identity
\[ 2\zeta(1,1,3)+2\zeta(1,2,2)+\zeta(2,1,2)=2\zeta(2)\zeta(3)-\frac{5}{2}\zeta(5)  \]
holds.  It is unlikely that the right-hand side is a multiple of $\zeta(2)\zeta(3)$.
\end{example}

\begin{remark}\label{Rem-4-11}\ We may prove \eqref{interm} by a similar argument 
starting from 
\begin{align*}
0=&\left(\sum_{l=-\infty}^\infty \frac{(-1)^l e^{(2l+1)i\theta}}{2l+1}-\frac{\pi}{2}\right)\sum_{m=0}^\infty \frac{(-z)^{m}e^{(2m+1)i\theta}}{(2m+1)^q}.
\end{align*}
Hence Theorem~\ref{Th-sum2} can be regarded as a triple version of \eqref{interm}.
\end{remark}

We end this section by proposing the following conjecture as an analogue of 
Machide's formula \cite[Corollary~4.1]{M}.

\begin{conj}\label{machide}  For $k\ge4$, we have
\[ \sum_{a+b+c=k \atop a, b\ge 1, c\ge 2}2^b(3^{c-1}-1)\TT(a,b,c)=\frac23(k-1)(k-2)\TT(k). \]
\end{conj}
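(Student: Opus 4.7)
The plan is to extend the Fourier-series-plus-partial-fractions method of \S\ref{sec-5-2} by inserting weight parameters $z_1, z_2$ into the second factor of the starting identity. Namely, the generalized version of Lemma~\ref{Lemma-4-8} would read
\[
0 = \left(\sum_{l=-\infty}^{\infty}\frac{(-1)^l e^{(2l+1)i\theta}}{2l+1}-\frac{\pi}{2}\right)\sum_{m,n\ge 0}\frac{z_1^m z_2^n}{(2m+1)(2n+1)^q}\,e^{(2m+2n+2)i\theta},
\]
for $-\pi/2<\theta<\pi/2$ and $|z_1|,|z_2|\le 1$. Integrating in $\theta$ from $-t$ to $t$, letting $t\to \pi/2$, and then exploiting the identities $\sum_{b\ge 0}(-1)^b = \tfrac12$ and $\sum_{j=0}^{c-2}3^j = \tfrac12(3^{c-1}-1)$, one expects to produce the weights $2^b$ and $3^{c-1}-1$ respectively on the two strings of partial fractions coming from iterating \eqref{eq-4-5}. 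The cross-term $I_2$ would again split via $l<n$ versus $l\ge n$, and the resulting odd-even mixed sums should combine, via the same Euler-type evaluations used in the proof of Theorem~\ref{Th-sum1}, into the clean right-hand side $\tfrac{2}{3}(k-1)(k-2)T(k)$.

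A possibly cleaner alternative is to adapt Machide's original argument \cite{M} directly. Since MTVs satisfy exactly the same shuffle product as MZVs by Theorem~\ref{Th-3-1}, any derivation-type identity or cyclic-sum relation that Machide derives purely from the shuffle algebra of iterated integrals over the alphabet $\{\varOmega_0,\varOmega_1\}$ lifts verbatim. If Machide's formula is obtained by combining Ohno's relation with the depth-$2$ weighted sum formula, then since the shuffle-based proof of Ohno's relation goes through for MTVs and since the depth-$2$ weighted sum formula is precisely Theorem~\ref{Th-sum1}, the conjecture would follow by the same manipulation. A sanity check at $k=4$ reduces the claim to the duality $T(1,1,2)=T(4)$, and at $k=5$ combining the conjecture with \eqref{SFwt5} predicts $T(1,1,3)=T(5)-\tfrac13 T(2)T(3)$, which can be numerically confirmed.

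The main obstacle in both routes is the absence of a clean stuffle product for MTVs: the congruence condition $m_i\equiv i\bmod 2$ forces any series-side multiplication to split into odd-even mixed pieces (such as $\zeta^{eo}$) that are not themselves $T$-values, and reassembling them requires a careful Euler-type evaluation of alternating MZVs. In depth $3$ these mixed sums proliferate, so controlling them so that only $T(k)$ survives on the right is the decisive step. A secondary, more technical, hurdle is producing the coefficient $3^{c-1}-1$: this ternary shape points to an iterated three-way partial-fraction expansion beyond Lemma~\ref{Lemma-4-9}, whose combinatorial bookkeeping is tedious but should be mechanical once the correct weight kernel in the Fourier-series identity above is chosen.
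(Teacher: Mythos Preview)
The statement you are addressing is stated in the paper as a \emph{conjecture}; the paper offers no proof, so there is nothing to compare your argument against.  What you have written is not a proof either: it is a pair of heuristic plans, each with gaps that you yourself flag.  A few specific points:

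\textbf{First route (parametrized Fourier/partial fractions).}  Inserting parameters $z_1,z_2$ into the generating factor is a reasonable thought, but the mechanism you describe for producing the weights is not one.  The ``identity'' $\sum_{b\ge0}(-1)^b=\tfrac12$ is a divergent series, and nothing in the derivation of Theorem~\ref{Th-sum2} supplies an Abel or Ces\`aro regularization that would justify it; nor do you explain how a single choice of $(z_1,z_2)$ simultaneously tags the middle index with~$2^b$ and the last with $3^{c-1}-1$.  The partial-fraction machinery of Lemma~\ref{Lemma-4-9} produces uniform weights; getting the geometric factors $2^b$ and $3^{c-1}$ would require a genuinely new decomposition, not just ``tedious bookkeeping.''

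\textbf{Second route (lifting Machide).}  The claim that Machide's argument can be transported because MTVs obey the same shuffle product is the crux, and it is where the approach breaks.  Machide's formula \cite{M} is derived from the \emph{extended double shuffle} relations, i.e.\ from the interplay of shuffle \emph{and} stuffle (including regularized versions).  Your conditional ``if Machide's formula is obtained by combining Ohno's relation with the depth-$2$ weighted sum formula'' is not checked, and in any case Ohno's relation itself is not known for MTVs; the usual proofs use either the stuffle side or Drinfel'd-associator machinery that has no established $T$-analogue.  Since, as you observe, MTVs lack a compatible stuffle product (the parity constraint forces mixed sums like $\zeta^{eo}$), this is exactly the obstruction that keeps the statement a conjecture rather than a theorem.

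Your sanity checks at $k=4,5$ are correct and worth recording, but they follow already from the duality and Theorems~\ref{Th-sum1}--\ref{Th-sum2}; they do not constitute evidence that either proposed method closes for general~$k$.
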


%%%%%%%%%%%%%%%%%%%%%%%%%%%%%%%%%%%%%%%%%%%%%%%%%%%%%%%%%%
\section{Relations among multiple $T$-, $t$-, and zeta values}\label{relTt}
%%%%%%%%%%%%%%%%%%%%%%%%%%%%%%%%%%%%%%%%%%%%%%%%%%%%%%%%%%

If we denote by $\mathcal{T}^*$ the $\Q$-vector space spanned by all Hoffman's multiple
$t$-values, then, as can be directly seen from the definition \eqref{hoft},  the space 
$\mathcal{T}^*$ also becomes a $\Q$-algebra by the {\it stuffle} (or {\it harmonic}) product, 
an example being $t(2)^2=2t(2,2)+t(4)$. Hence, we have two $\Q$-subalgebas $\Tcal$ and 
$\mathcal{T}^*$ of the algebra of alternating multiple zeta values, one being closed under the shuffle product
and the other under the stuffle product.  There are both shuffle and stuffle product structures
on the whole space of alternating multiple zeta values.

It seems that the sum $\Tcal+\mathcal{T}^*$ does not exhaust all alternating MZVs, and that the
seemingly smaller space $\Tcal$
is not contained in $\mathcal{T}^*$, as the following table (numerically computed, only up to
weight 8) suggests.
\vspace{7pt}
\begin{center}
\begin{tabular}{|c|r|r|r|r|r|r|r|r|r|} \hline
$k$ & 0&1&2& 3& 4& 5& 6& 7& 8\\ 
\hline   &&\\[-13pt]\hline 
$\dim(\Tcal_k+\mathcal{T}^*_k)$ & 1& 0& 1& 2& 4& 5& 9& 14& 24\\ 
\hline
$\dim(\Tcal_k\cap\mathcal{T}^*_k)$& 1& 0 & 1& 1& 1& 2& 3& 4& 6\\ 
\hline
\end{tabular}
\end{center}
\vspace{7pt}

Let $\ZZ$ be the space of usual multiple zeta values. The well-known conjectural dimension 
(Zagier~\cite{Z}) of the subspace $\ZZ_k$ of weight $k$ 
is given by the sequence $d_k$ which satisfies $d_k=d_{k-2}+d_{k-3}$ with $d_0=1, d_1=0, d_2=1$.
\vspace{7pt}

\begin{center}
\begin{tabular}{|c|r|r|r|r|r|r|r|r|r|r|r|r|r|r|r|r|r|} \hline
$k$ & 0&1&2& 3& 4& 5& 6& 7& 8& 9& 10& 11& 12& 13& 14& 15\\ 
\hline  &&\\[-13pt]\hline
$d_k$ & 1& 0& 1& 1& 1& 2& 2& 3& 4& 5& 7& 9& 12& 16& 21& 28\\ 
\hline
\end{tabular}
\end{center}
\vspace{7pt}

It appears that $d_k^T\ge d_k$ holds for all $k$ ($F_{k-1}\ge d_k$ is certainly true, where $F_{k-1}$
is conjectured to be equal to $d_k^t$).
Moreover, we conjecture (also based on our numerical experiments) that the 
space $\Tcal$ as well as $\mathcal{T}^*$ contains the space~$\ZZ$.

\begin{conj}  
Both $\Tcal$ and $\mathcal{T}^*$ contain $\ZZ$ as a $\Q$-subalgebra.
\end{conj}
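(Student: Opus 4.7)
The base case is elementary: for each $k\ge 2$ one has $T(k)=2(1-2^{-k})\zeta(k)$ and $t(k)=(1-2^{-k})\zeta(k)$, so every single zeta value lies in both $\Tcal$ and $\mathcal{T}^*$. The plan is to upgrade this to all admissible indices by induction on the weight, using a different decomposition for each of the two inclusions.

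For $\mathcal{T}^*\supset \ZZ$, the natural first step is to split
\[
\zeta(k_1,\ldots,k_r)=\sum_{\epsilon\in\{0,1\}^r} S_\epsilon(k_1,\ldots,k_r)
\]
according to the parity vector $\epsilon=(\epsilon_1,\ldots,\epsilon_r)$ of the summation variables, where $\epsilon_i=1$ encodes ``$m_i$ odd''. The all-ones term is exactly $t(\mathbf{k})$, and the all-zeros term equals $2^{-|\mathbf{k}|}\zeta(\mathbf{k})$ after substituting $m_i\mapsto 2n_i$. Collecting these two pieces, $(1-2^{-|\mathbf{k}|})\zeta(\mathbf{k})$ becomes $t(\mathbf{k})$ plus the mixed-parity sums $S_\epsilon(\mathbf{k})$, and the task reduces to showing that each such $S_\epsilon(\mathbf{k})$ lies in $\mathcal{T}^*$. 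For these, the plan is to exploit the stuffle product between $t$-values and values of the form $2^{-k}\zeta(k)$ (themselves in $\mathcal{T}^*$ by the base case), since stuffling produces precisely the mixed-parity partial sums; inverting the resulting linear system inductively should place every $S_\epsilon(\mathbf{k})$ in $\mathcal{T}^*$ modulo lower-weight or lower-depth data already handled by induction.

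For $\Tcal\supset \ZZ$, I would instead use the iterated-integral expression of $T$-values given earlier in the paper. Since $\varOmega_1(t)=dt/(1-t)+dt/(1+t)$, each $T$-value expands as a $2^r$-fold signed sum of alternating multiple zeta values of level $2$, one summand (the all-``$+$'' choice) being exactly the classical MZV $\zeta(\mathbf{k})$. The strategy is to invert this expansion and solve for $\zeta(\mathbf{k})$ as a $\Q$-linear combination of $T$-values by using the distribution (dilation) relations among alternating MZVs to kill the unwanted signed terms, combined with the duality and shuffle relations of Section~\ref{sec-5}.

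The main obstacle in both approaches is that the intermediate objects --- mixed-parity partial Euler sums for $\mathcal{T}^*$, and genuine alternating MZVs with non-trivial sign patterns for $\Tcal$ --- are not visibly in $\mathcal{T}^*$ or $\Tcal$, and only make sense a priori inside the larger algebra of alternating multiple zeta values. A clean conceptual proof probably requires the motivic framework: one should identify $\Tcal$ and $\mathcal{T}^*$ as Galois-stable sub-comodules of the motivic iterated integrals on $\mathbb{P}^1\setminus\{0,\pm 1,\infty\}$ and verify that each contains the sub-comodule of motivic MZVs. In the absence of such machinery, even the depth-$2$ case requires a non-trivial identity, and a uniform proof in all depths appears to be out of reach with present techniques, which is presumably why the statement is left as a conjecture.
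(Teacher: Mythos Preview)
This statement is labeled a \emph{Conjecture} in the paper, and the paper offers no proof whatsoever; it is presented as an open problem supported only by the numerical tables for $d_k^T$, $d_k^t$, and $d_k$. There is therefore no ``paper's own proof'' to compare against. Your write-up recognizes this correctly: you outline two plausible attack routes (parity decomposition plus stuffle for $\mathcal{T}^*$, and the splitting $\varOmega_1(t)=dt/(1-t)+dt/(1+t)$ plus shuffle/distribution for $\Tcal$), you identify the essential obstruction (the intermediate pieces are genuine alternating MZVs not visibly in the target subalgebra), and you conclude that a uniform argument probably needs motivic machinery. That is an accurate reading of the situation and is consistent with the paper's own stance.

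One technical remark on your $\mathcal{T}^*$ strategy: the identity you obtain from $t(\mathbf{k})\cdot 2^{-l}\zeta(l)$ is a \emph{single} linear relation among the $r{+}1$ mixed-parity sums obtained by inserting the even variable into each possible slot, so ``inverting the resulting linear system'' is already underdetermined at depth~$2$; the gap you flag is genuine, not merely cosmetic. Also note that the Murakami result cited in the paper's final remark goes in the \emph{opposite} direction (certain $t$-values lie in $\ZZ$) and so does not touch this conjecture.
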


The intersection $\Tcal\cap\mathcal{T}^*$ seems strictly larger than $\ZZ$, as the tables above 
suggest. If this conjecture is true, then both $\Tcal$ and $\mathcal{T}^*$ are modules over
$\ZZ$.  What are the structures of these modules?

Specific elements show an interesting pattern.
By definition, the single $T$-values $\TT(k)$ and $t$-values $t(k)$ are multiples of $\zeta(k)$ and
hence contained in $\ZZ$.  And by our parity result (Theorem~\ref{Th-5-2}), every double $T$-value 
of odd weight is also contained in $\ZZ$.
For higher depths, we conjecture the following. Since we have the duality for $T$-values, 
we may restrict ourselves to the $T$-values with depth smaller than or equal to $\text{weight}/2$.

\begin{conj}\label{conj5-2}
1) For even weights, other than the single $T$-value $\TT(k)$, only $T(p,q,r)$ with $p,r:\text{odd}\ge3$ and $q:\text{even}$ (and their duals) are in $\ZZ$.

2) If the weight is odd, other than the single and the double $T$-values,
only $T(p,1,r)$ with $p,r:\text{even}$ (and their duals) are in $\ZZ$.
\end{conj}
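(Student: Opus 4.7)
The plan is to separate the conjecture into its \emph{membership} direction (the listed $T$-values lie in $\ZZ$) and its \emph{exclusion} direction (no other depth-$\ge3$ $T$-value does), and to attack these by very different methods. The exclusion direction will be the main obstacle.

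For membership in part~1), depth~$3$ and even weight have opposite parity, so the parity theorem (Theorem~\ref{Th-5-2}) reduces $\TT(p,q,r)$ to a $\Q$-linear combination of single and double $T$-values and their products. Single $T$-values are rational multiples of $\zeta$-values and hence in $\ZZ$, and by Example~\ref{Exam-5-3} every double $T$-value of odd weight lies in $\ZZ$ (since its depth-$2$ parity reduction expresses it through single $T$-values only). Membership then follows once one verifies, by direct inspection of the explicit depth-$3$ parity reduction of \cite{Tsu2007}, that for the pattern $(\text{odd}\ge3,\,\text{even},\,\text{odd}\ge3)$ every double $T$-value appearing in the reduction has odd weight; the complementary assertion (that outside this pattern an even-weight double $T$-value appears with nonzero coefficient and fails to lie in $\ZZ$) is part of the exclusion problem below.

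For membership in part~2), the parity result does not apply directly to $\TT(p,1,r)$ since both its depth~$3$ and its weight $p+1+r$ are odd. However, the $\varepsilon$-encoding of $(p,1,r)$ has length $p+1+r$ with exactly three $1$'s, so its dual index has depth $(p+1+r)-3$, which is \emph{even} whenever $p$ and $r$ are even. By duality, $\TT(p,1,r)$ equals this dual $T$-value, whose depth and weight now have opposite parity, so Theorem~\ref{Th-5-2} applies to the dual and reduces it to $T$-values of strictly smaller depth. The strategy is then to induct on the weight, invoking part~1) inductively at depth~$3$ and iterating the parity reduction together with the duality at higher depths; the partial-fraction machinery of Lemma~\ref{Lemma-4-9} may be useful in making the resulting $\ZZ$-expressions explicit.

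The exclusion direction is the principal obstacle, because establishing that a specific period is \emph{not} in $\ZZ$ is a transcendence statement that cannot be settled by elementary manipulation of known relations. My realistic plan is to pass to the Hopf algebra of motivic alternating MZVs in the framework of Goncharov and Brown: lift each $T$-value to a motivic period and compute its image under the reduced motivic coaction modulo motivic $\ZZ$. A $T$-value outside the conjectural list should produce a nonzero alternating letter in this coaction, certifying its exclusion from $\ZZ$. This strategy is conditional on the standard conjectures for the motivic Galois group of alternating MZVs; unconditionally, the only feasible step appears to be extending the numerical verification in a conjectural basis of alternating MZVs to weights well beyond those reported in the paper.
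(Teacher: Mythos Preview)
The statement you are attempting to prove is presented in the paper as a \emph{conjecture}; the paper offers no proof, and the only remark is that Murakami has since established the membership direction (Conjectures~\ref{conj5-2} and~\ref{conj5-3} ``except the `only' parts'') by motivic methods. So there is no argument in the paper to compare your proposal against, and your own proposal is, appropriately, a strategy rather than a proof.

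That said, your plan for the membership direction in part~1) contains a concrete error. You claim that in the parity reduction of $\TT(p,q,r)$ with $p,r$ odd $\ge3$ and $q$ even, ``every double $T$-value appearing in the reduction has odd weight.'' This is false. The paper records (in the remark following Conjecture~\ref{conj5-3}) the shape of the reduction
\[
\TT(2p{+}1,2q,2r{+}1)=-\sum_{j=0}^{p-1}\TT(2p{-}2j)\,s(2q{-}1,2r{+}1,2j{+}1)-\sum_{j=0}^{r-1}\TT(2r{-}2j)\,s(2q,2j{+}2,2p)+\cdots,
\]
and the second family $s(2q,2j{+}2,2p)$ is a combination of double $T$-values of \emph{even} weight. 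That these particular combinations lie in $\ZZ$ is exactly the content of Conjecture~\ref{conj5-3}, which is not a consequence of the parity theorem; its proof (by Murakami) again requires motivic input. So membership in part~1) does not follow from Theorem~\ref{Th-5-2} and Example~\ref{Exam-5-3} alone, contrary to what you suggest.

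For part~2), your duality-plus-induction scheme is too vague to be an argument. The dual of $(p,1,r)$ is $(\underbrace{1,\ldots,1}_{r-2},3,\underbrace{1,\ldots,1}_{p-2},2)$, of depth $p+r-2$; the parity theorem does apply to it, but the reduction produces $T$-values of all intermediate depths at the \emph{same} weight, and nothing in your plan forces those terms into the restricted families of Conjecture~\ref{conj5-2}. Inducting on the weight, as you propose, does not help here.

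Your assessment of the exclusion direction is accurate: the ``only'' assertions are transcendence statements that remain open, and the motivic-coaction approach you sketch is the natural (conditional) line of attack. This matches the status reported in the paper.
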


Recall that, from our parity result, the triple $T$-value $T(p,q,r)$ of even weight
can be written in terms of single and the double $T$-values.  From an explicit formula for such an expression
(see \cite{Tsu-parity} for the detail), we surmise that the following is true.

\begin{conj}\label{conj5-3} For $m\ge1, p\ge1, q\ge2$ with $p+q+m$ even, we have
\[ \sum_{i+j=m\atop i,j\ge0} \binom{p+i-1}{i}\binom{q+j-1}{j}T(p+i,q+j)\,\in\,\ZZ.\]
\end{conj}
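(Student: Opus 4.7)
My approach is to identify $T_m(p,q):=\sum_{i+j=m}\binom{p+i-1}{i}\binom{q+j-1}{j}T(p+i,q+j)$ as (up to corrections already in $\ZZ$) the double-$T$ component of a parity reduction of a carefully chosen triple $T$-value, and then apply Theorem~\ref{Th-5-2}.

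First I would collapse the defining sum into a single double integral. Performing the $\omega_0$-integrations in each of the two blocks of the iterated-integral expression \eqref{integ-exp} gives
\[
T(a,b)=\frac{4}{(a-1)!(b-1)!}\int_{0<s_1<s_2<1}\frac{(\log(s_2/s_1))^{a-1}(-\log s_2)^{b-1}}{(1-s_1^2)(1-s_2^2)}\,ds_1\,ds_2.
\]
Applying the binomial identity $\sum_{i+j=m}\tfrac{x^iy^j}{i!j!}=\tfrac{(x+y)^m}{m!}$ with $x=\log(s_2/s_1)$ and $y=-\log s_2$ (so $x+y=-\log s_1$), the conjectural sum telescopes to
\[
T_m(p,q)=\frac{4}{(p-1)!(q-1)!\,m!}\int_{0<s_1<s_2<1}\frac{(-\log s_1)^m(\log(s_2/s_1))^{p-1}(-\log s_2)^{q-1}}{(1-s_1^2)(1-s_2^2)}\,ds_1\,ds_2,
\]
which has the shape of the fundamental integrals used in \cite{Tsu2007} to establish the parity theorem.

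Second, I would view $(-\log s_1)^m/m!$ as the $m$-fold iterated integral of $\omega_0=dt/t$ over $s_1<u_1<\cdots<u_m<1$, and apply the involution $s\mapsto(1-s)/(1+s)$ (the substitution used in the proof of duality) on appropriate variables to convert one $\omega_0$ into an $\omega_1$. This should produce --- up to explicit correction terms that are single $T$-values or products thereof, and hence in $\ZZ$ via $T(k)=2(1-2^{-k})\zeta(k)$ --- a genuine triple $T$-value $T(P,Q,R)$ of weight $p+q+m$ and depth $3$. Since depth $3$ (odd) and weight $p+q+m$ (even) are of opposite parity, Theorem~\ref{Th-5-2} reduces $T(P,Q,R)$ to a $\mathbb{Q}$-linear combination of $T$-values of strictly smaller depth and products thereof, all lying in $\ZZ$. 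Combined with the corrections, this yields $T_m(p,q)\in\ZZ$.

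The main obstacle is the second step: exhibiting the natural triple $T(P,Q,R)$ whose parity reduction contains $T_m(p,q)$ exactly, with the residual terms in $\ZZ$. The authors' remark that Conjecture~\ref{conj5-3} was surmised by inspection of the explicit parity formula for triples in the forthcoming \cite{Tsu-parity} strongly suggests that such a triple exists and that the binomial coefficients $\binom{p+i-1}{i}\binom{q+j-1}{j}$ arise from a Vandermonde-type identity in the combinatorics of the reduction. A subsidiary issue is that parity reductions in depth $3$ generally involve multiple $L$-values with the character of conductor $4$; these must cancel out of our specific combination, which should follow from $p+q+m\equiv 0\pmod 2$ as in \cite{Tsu2007}. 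As a sanity check, the first nontrivial case $(p,q,m)=(1,2,1)$ gives $T_1(1,2)=T(2,2)+2T(1,3)=\tfrac12 T(2)^2=\tfrac98\zeta(2)^2\in\ZZ$ directly from the shuffle relation $T(2)^2=4T(1,3)+2T(2,2)$.
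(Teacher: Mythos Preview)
The statement you are attempting to prove is stated in the paper as a \emph{conjecture}; the paper gives no proof, noting only that the guess was made by inspecting the explicit parity reduction in depth~3 and that Murakami has since proved it by motivic methods. So there is no ``paper's own proof'' to compare against, and your task is to produce an actual argument.

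Your first step is correct and useful: the binomial sum does collapse to the integral
\[
T_m(p,q)=\frac{4}{(p-1)!(q-1)!\,m!}\int_{0<s_1<s_2<1}
\frac{(-\log s_1)^m(\log(s_2/s_1))^{p-1}(-\log s_2)^{q-1}}{(1-s_1^2)(1-s_2^2)}\,ds_1\,ds_2.
\]
But the second and third steps contain a genuine gap. Even granting that you can massage this into a triple $T(P,Q,R)$ of even weight plus correction terms, Theorem~\ref{Th-5-2} does \emph{not} put such a triple into $\ZZ$. The parity result only reduces $T(P,Q,R)$ to a $\Q$-linear combination of $T$-values of depth~$\le 2$ and products thereof. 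Single $T$-values lie in $\ZZ$, but double $T$-values of even weight generally do \emph{not}: already $T(1,3)$ is (conjecturally) outside $\ZZ$, since $\dim\Tcal_4=2$ while $\dim\ZZ_4=1$. Your sentence ``all lying in $\ZZ$'' is therefore unjustified and is exactly the content of the conjecture.

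In fact the remark immediately following the conjecture in the paper shows that the argument is circular: the explicit parity reduction of $T(2p+1,2q,2r+1)$ has its depth-$2$ part packaged precisely as $\sum T(2n)\,s(\,\cdot\,,\,\cdot\,,\,\cdot\,)$, where the $s$'s are your $T_m(p,q)$. So starting from a triple and applying parity reduces you to the very sums you are trying to place in $\ZZ$, together with products of single $T$'s. Knowing that certain triples lie in $\ZZ$ (Conjecture~\ref{conj5-2}) would let you invert this and deduce Conjecture~\ref{conj5-3}, but that is another unproven conjecture; the parity theorem by itself is not enough.
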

For instance, the case $m=1$ predicts $qT(p,q+1)+pT(p+1,q)\in\ZZ$.

\begin{remark}  Denoting the sum in the conjecture above by $s(p,q,m)$,
the form of the parity reduction for $T(2p+1,2q,2r+1)$ is
\begin{align*}  
&T(2p+1,2q,2r+1)\\
&=-\sum_{j=0}^{p-1}T(2p-2j)s(2q-1,2r+1,2j+1)
-\sum_{j=0}^{r-1} T(2r-2j)s(2q,2j+2,2p)\\
&\quad +\text{sum of products of single $T(n)$'s}. 
\end{align*}
\end{remark}
\vspace{7pt}
 
As for $t$-values, we experimentally observe that any $t(k_1,\ldots,k_r)$ with $\forall k_i\ge2$ is in $\ZZ$.
Among those, we may choose the following elements as linear and algebraic bases of $\ZZ$.

\begin{conj}
1)  A linear basis of the space $\ZZ_k$ of multiple zeta values of weight $k$ is given by
\[ \{t(2)^n t(k_1,\ldots,k_r)\,\mid\, n,r\ge0, \forall k_i:\text{odd} \ge3,\, 2n+k_1+\cdots+k_r=k \} \]

2) An algebra basis of $\ZZ$ is given by $t(2)$ and $t(k_1,\ldots,k_r)$ with $\forall k_i:\text{odd}  \ge3$
and the sequence $(k_1,\ldots,k_r)$ being Lyndon. 
\end{conj}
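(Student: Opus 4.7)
The plan is to parallel Brown's motivic framework, with the $\{2,3\}$-basis replaced by the proposed $t$-value basis. First, I would verify that both parts of the conjecture are dimensionally consistent. For Part~1), the generating function of the proposed basis is
\[
\frac{1}{1-X^2}\cdot\frac{1}{1-(X^3+X^5+X^7+\cdots)}=\frac{1}{1-X^2-X^3},
\]
which matches Zagier's $\sum_k d_k X^k$. For Part~2), a Witt-style count of Lyndon words in the odd alphabet $\{3,5,7,\ldots\}$ (each letter weighted by its value), combined with the free generator $t(2)$ of weight~$2$, yields the same Hilbert series via the polynomial-algebra identity $\prod_j (1-X^j)^{-L_j}$. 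Hence both parts are compatible with $\dim\ZZ_k=d_k$.

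Next I would pass to the motivic world. Each $t(\mathbf{k})$ with all $k_i\ge 2$ has an iterated-integral presentation on $\mathbb{P}^1\setminus\{0,\pm1,\infty\}$ analogous to \eqref{integ-exp}, so it lifts to a motivic avatar $t^{\mathfrak m}(\mathbf{k})$ inside Brown's motivic MZV algebra $\mathcal{H}$. Brown's theorem says that $\{\zeta^{\mathfrak m}(k_1,\ldots,k_r):k_i\in\{2,3\}\}$ already spans $\ZZ^{\mathfrak m}$, so to establish spanning it suffices to rewrite each such $\{2,3\}$-MZV as a $\Q$-linear combination of $t^{\mathfrak m}(2)^n t^{\mathfrak m}(j_1,\ldots,j_s)$ with $j_i$ odd $\ge 3$. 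At single arguments the identities $t(2)=\tfrac{3}{2}\zeta(2)$ and $t(2m+1)=(1-2^{-2m-1})\zeta(2m+1)$ suffice; for higher depth I would induct on weight and depth, exploiting stuffle identities among $t$-values, shuffle relations lifted through the motivic coaction, and the parity reductions already available in depth~$2$. Part~2) then follows from Part~1) by Radford's theorem on the shuffle-Hopf structure, since Lyndon words in any polynomial generating set form a transcendence basis.

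The principal obstacle is this spanning step: producing an explicit weight-by-weight triangular base change between Brown's $\{2,3\}$-family and the proposed $t$-value family. Each inductive step demands an identity linking $\zeta^{\mathfrak m}(2,3,\ldots)$-type motivic periods to products of $t^{\mathfrak m}(2)$ with motivic $t$-values of odd arguments, and I see no automatic mechanism producing such identities uniformly. Even granting spanning, unconditional linear independence is at least as strong as Zagier's dimension conjecture itself, so the best realistic outcome is a proof \emph{conditional} on Zagier (equivalently, on the motivic non-vanishing half of Brown's theorem). A prudent preliminary step would be to verify the proposed base change numerically through, say, weight~$12$ via Pari-GP, as a safety check against hidden obstructions before committing to the general inductive construction.
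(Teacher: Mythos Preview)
The paper offers no proof of this statement: it is stated as a \emph{conjecture} and left open. There is therefore nothing to compare your attempt against. The subsequent remark in the paper mentions only that Murakami has shown $t(k_1,\ldots,k_r)\in\ZZ$ whenever all $k_i\ge 2$ (and has settled the \emph{other} conjectures \ref{conj5-2} and \ref{conj5-3} modulo the ``only'' parts); no spanning or basis claim for the present conjecture is asserted.

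Your proposal is an honest outline, and you correctly isolate the two genuine obstructions. First, the linear-independence half is at least as hard as Zagier's dimension conjecture, so the best possible outcome is conditional---you say this yourself. Second, even the spanning half is not reduced to anything concrete: your inductive step (``rewrite each $\{2,3\}$-motivic MZV in the proposed $t$-family'') is precisely the content of the conjecture, and you give no mechanism for producing the required identities beyond depth~$1$. One technical slip: the iterated-integral presentation of $t(\mathbf{k})$ lives on $\mathbb{P}^1\setminus\{0,\pm1,\infty\}$, so its motivic lift naturally lands in the \emph{level-$2$} (alternating) motivic algebra, not in Brown's $\mathcal{H}$. Knowing that $t^{\mathfrak m}(\mathbf{k})\in\mathcal{H}$ already requires a Murakami-type argument and is not automatic from the integral. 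Finally, deriving Part~2) from Part~1) ``by Radford'' presupposes that $\ZZ$ (or its motivic avatar) is a free polynomial algebra with the expected Hilbert series; motivically this is Brown's theorem, but unconditionally it is again part of what is being conjectured.
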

With the usual order by magnitude, a sequence $(k_1,\ldots,k_r)$ is Lyndon if any right 
subsequence $(k_i,\ldots,k_r)$ ($i\ge2$) is greater than $(k_1,\ldots,k_r)$ in lexicographical order.

\begin{remark} 
Quite recently, T.~Murakami proved our observation $t(k_1,\ldots,k_r)\in\ZZ$ if $\forall k_i\ge2$,
by using the motivic method employed in \cite{Gla}. Also he could prove Conjecture~\ref{conj5-3}
or Conjecture~\ref{conj5-2} except the `only' parts.
\end{remark}

%%%%%%%%%%%%%%%%%%%%%%%%%%%%%%%%%%%%%%%%%%%%%%%%%%%%%%%%%%
\section{Description of the space $\Tcal_k$ for low weights} 
%%%%%%%%%%%%%%%%%%%%%%%%%%%%%%%%%%%%%%%%%%%%%%%%%%%%%%%%%%

Obviously $\Tcal_2=\Q\cdot T(2)$ is one dimensional, and by the duality \eqref{duality} the space 
\[ \Tcal_3=\Q\cdot T(3)+\Q\cdot T(1,2) =\Q\cdot T(3)\] is also one dimensional.

At weight 4, we have $T(1,1,2)=T(4)$ by the duality and $T(2,2)=\frac12 T(4)-2T(1,3)$ by 
the sum formula \eqref{SF-2}, and thus we see that 
\[ \Tcal_4=\Q\cdot T(4)+\Q\cdot T(1,3).  \]
According to our conjecture (see the table in \S2), this would give a basis of $\Tcal_4$.

We conjecture that the space $\Tcal_5$ of weight 5 is also two dimensional. By the duality,
we see that $\Tcal_5$ is spanned by $T(5)$ and elements of depth 2.  We have two independent
relations
\begin{align*}
& 4T(1,4)+2T(2,3)+T(3,2)=2T(5), \\
& 2 T(1, 4) + 2T(2,3) + T(3, 2)=4T(1,4)+2T(2,3)+\frac23 T(3,2)
\end{align*}
coming from \eqref{SF-2} and \eqref{SFwt5} (as for the latter, we used the duality on the left-hand side
and the shuffle product on the right), and from these we obtain
\[ T(3,2)=6T(1,4)\quad\text{and}\quad T(2, 3) =T(5)- 5T(1, 4). \]
Hence we conclude 
\[ \Tcal_5=\Q\cdot T(5)+\Q\cdot T(1,4). \]

Already at weight 6, known identities appear not to be enough to reduce the dimension to the 
conjectural 4. Using Theorems~\ref{Th-3-1} through \ref{Th-5-2} and relations obtained 
by applying the shuffle product to lower weight relations, we may deduce
\begin{align*}
T(1, 2, 3) &= -\frac{25}{12} T(6) + 12 T(1, 5) + 6 T(2, 4) + 2 T(3, 3) - 2 T(1, 1, 4), \\
T(1, 3, 2) &=  \frac{55}{12} T(6) - 24 T(1, 5) - 12 T(2, 4) - 4 T(3, 3) - T(1, 1, 4), \\
T(2, 1, 3) &=  \frac{55}{12} T(6) - 24 T(1, 5) - 12 T(2, 4) - 4 T(3, 3) -   T(1, 1, 4), \\
T(2, 2, 2) &= -\frac{35}4 T(6) + 48 T(1, 5) + 24 T(2, 4) +   8 T(3, 3) + 6 T(1, 1, 4), \\
T(3, 1, 2) &= \frac56 T(6) - T(1, 1, 4), \\
T(4, 2) &= \frac52 T(6) - 8 T(1, 5) - 4 T(2, 4) - 2 T(3, 3).
\end{align*}
One missing relation would be supplied by Conjecture~\ref{conj5-3}, which predicts for instance
\begin{align*} 3T(2,4)+2T(3,3)&=-\frac{15}7 T(6)+\frac{10}7 T(3)^2\\
&=-\frac{15}7 T(6) + \frac{120}7 T(1, 5)+ \frac{60}7 T(2, 4) + \frac{20}7 T(3, 3).
\end{align*}
(Note that the space of multiple zeta values of weight 6 is spanned by $\zeta(6)=\frac{32}{63} T(6)$ and
$\zeta(3)^2=\frac{16}{49}T(3)^2$.)  From this we could conclude 
\[ \Tcal_6=\Q\cdot T(6)+\Q\cdot T(1,5)+\Q\cdot T(2,4)+\Q\cdot T(1,1,4). \]

In a similar vein, we may deduce by using proven relations that the space $\Tcal_7$ is at most 6 dimensional,
and by assuming Conjecture~\ref{machide}, we may reduce the dimension to the 
conjectural 5.\\

Since it becomes more and more tedious to write down the parity reduction 
explicitly as the depth gets larger, we have not checked if all relations obtained and
conjectured in this paper are enough to give the conjectural upper bound of the 
dimension of $\Tcal_k$ for $k$ greater than 7.  Are there any other nicer families of 
relations among MTVs, and what is the complete set of linear relations?

\ 

{\bf Acknowledgements.}\ 
{This work was supported by Japan Society for the Promotion of Science, Grant-in-Aid for Scientific Research (S) 16H06336 (Kaneko) and (C) 18K03218 (Tsumura).}

\

\end{document}